\renewenvironment{proof}[1][\proofname]{\par
    \pushQED{\qed}%
    \normalfont \topsep6\p@\@plus6\p@ \labelsep1em\relax
    \trivlist
    \item[\hskip\labelsep\noindent
         #1.]\ignorespaces
}{%
    \popQED\endtrivlist\@endpefalse
}
\newtheoremstyle{mythm}{}{}
    {\rmfamily}{}{\bfseries}{ }{1ex}{}
\theoremstyle{mythm}
\newtheorem{thm}{Theorem}[section]
\newtheorem{col}[thm]{Corollary}
\newtheorem{lem}[thm]{Lemma}
\newtheorem{prop}[thm]{Proposition}
\newtheoremstyle{mydef}{}{}
    {\rmfamily}{\parindent}{\ttfamily}{ }{1ex}{}    
\theoremstyle{mydef}
\newtheorem{defn}[thm]{D{\footnotesize EFINITION}}
\newtheorem{exm}[thm]{E{\footnotesize XAMPLE}}
\numberwithin{equation}{section}
\begin{document}
\date{}
\title{Self-mapping degrees of torus bundles and torus semi-bundles
\footnotetext{\text{2000 Mathematics Subject Classification.}
Primary 57M10; Secondary 55M25.} }

\author{\bf Hongbin SUN, Shicheng WANG and Jianchun WU}

 \maketitle


\maketitle
\begin{abstract} Each closed oriented $3$-manifold $M$ is
naturally associated with a set of integers $D(M)$, the degrees of
all self-maps on $M$. $D(M)$ is determined for each torus bundle and
torus semi-bundle $M$. The structure of torus semi-bundle is studied
in detail. The paper is a part of a project to determine $D(M)$ for
all 3-manifolds in Thurston's picture.
\\\

\end{abstract}
\tableofcontents
\section{Introduction}

\subsection {Background.}

Each closed oriented $n$-manifold $M$ is naturally associated with a
set of integers, the degrees of all self-maps on $M$, denoted as
$D(M)=\{deg(f)\ |\  f :M\to M\}$.

Indeed the calculation of $D(M)$ is a classical topic appeared in
many literatures. The result is simple and well-known for dimension
$n=1,2$, and for dimension $n>3$, there are many interesting special
results (see \cite{DW} and references therein), but it is difficult
to get general results, since there are no classification results
for manifolds of dimension $n>3$.

The case of dimension 3 becomes attractive in the topic and it is
possible to calculate $D(M)$ for any closed oriented 3-manifold $M$.
Since Thurston's geometrization conjecture, which seems to be
confirmed, implies that closed oriented 3-manifolds can be
classified in reasonable sense.

Thurston's geometrization conjecture claims that the each
Jaco-Shalen-Johanson decomposition piece of a prime 3-manifold
supports one of the eight geometries which are $H^3$,
$\widetilde{PSL}(2,R)$, $H^2\times E^1$, Sol, Nil, $E^3$, $S^3$ and
$S^2\times E^1$ (for details see \cite{Th} and \cite{Sc}). Call a
closed orientable 3-manifold $M$ is {\it geometrizable} if each
prime factor of $M$ meets Thurston's geometrization conjecture.

A  known rather general fact about $D(M)$ for geometrizable
3-manifolds is the following:

\begin{thm}[\cite{Wa}, Corollary 4.3]\label{before}
Suppose $M$ is a geometrizable  3-manifold. Then $M$ admits a
self-map of degree larger than 1 if and only if $M$ is either

(1) covered by a torus bundle over the circle, or

(2) covered by an $F\times S^1$ for some compact surface $F$ with
$\chi(F)< 0$, or

(3) each prime factor of $M$ is covered by $S^3$ or $S^2\times E^1$.
\end{thm}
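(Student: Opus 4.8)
The plan is to establish the two implications by different means: for necessity (a self-map of degree $>1$ lands $M$ in one of the three families) I would use two multiplicative ``volume'' invariants to kill all the rigid geometries, and for sufficiency I would build explicit maps. The first invariant is Gromov's simplicial volume $\|\cdot\|$. Any $f\colon M\to M$ of degree $d$ satisfies $|d|\cdot\|M\|\le\|M\|$, so a self-map of degree $>1$ forces $\|M\|=0$; by additivity of $\|\cdot\|$ under connected sum and its positivity on hyperbolic pieces (Gromov--Thurston together with Soma's gluing theorem) no prime factor of $M$ can carry a hyperbolic JSJ piece. Hence every aspherical prime factor is a graph manifold, and every other prime factor is covered by $S^3$ or $S^2\times E^1$. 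To remove the Seifert pieces modelled on $\widetilde{PSL}(2,\mathbb R)$ I would invoke the Brooks--Goldman Seifert volume, which is also multiplicative under nonzero-degree maps and is positive exactly on Seifert manifolds with negative orbifold Euler characteristic and nonzero Euler number. Thus a degree $>1$ self-map additionally forces every Seifert piece to have vanishing Euler number, leaving among the pieces only the geometries $E^3$, Nil, $H^2\times E^1$ (and the non-Seifert Sol torus bundles), together with the $S^3$ and $S^2\times E^1$ factors.

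The remaining and hardest step of necessity is to show that a \emph{genuine} graph manifold, whose JSJ decomposition contains at least two Seifert pieces glued along essential tori, carries no self-map of degree $>1$. Here I would use that a nonzero-degree map of a Haken manifold can be homotoped to be standard with respect to the JSJ tori and fiber-preserving on each Seifert piece; such a map then induces a self-map of the finite JSJ graph together with a base-degree and a fiber-degree at each vertex. Tracking how the gluing matrices on the tori and the (now vanishing) vertex Euler numbers must transform, one finds that a globally consistent assignment of these degrees around a finite graph forces the total degree to be $\pm1$ --- the combinatorics cannot support the expansion a degree $>1$ map would demand. This eliminates the genuine graph manifolds and leaves only single Seifert pieces, which (having zero Euler number) are exactly the $E^3$, Nil and $H^2\times E^1$ manifolds, plus the Sol torus bundles. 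Matching geometries then yields precisely case (1) (covered by a torus bundle: the $E^3$, Nil and Sol manifolds) and case (2) (covered by $F\times S^1$ with $\chi(F)<0$: the zero-Euler-number $H^2\times E^1$ manifolds). I expect this graph-combinatorial argument to be the main obstacle.

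For sufficiency I would exhibit degree $>1$ self-maps family by family. On a torus bundle $T^2\to M\to S^1$ the fiberwise scaling $(x,t)\mapsto(nx,t)$ on $(\mathbb R^2\times\mathbb R)/\!\sim$ commutes with every monodromy, descending to a self-map of degree $n^2$. On $F\times S^1$ with $\chi(F)<0$ one crosses a self-map of $F$ of any degree $k>1$ (closed surfaces admit self-maps of every degree) with the identity on $S^1$. For the third family one builds maps on each prime factor --- $S^2\times S^1$ carries self-maps of every degree, and spherical space forms admit self-maps of degree $>1$ (for instance of square degree) --- and assembles them across the connected sum by a pinch-type construction, the relevant obstructions all vanishing. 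Finally, since all three constructions and all the necessity constraints are invariant under passing to finite covers, lifting each construction to the appropriate finite cover gives the ``covered by'' formulation in the statement.
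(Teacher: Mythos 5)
The paper does not actually prove this statement: it is quoted from \cite{Wa}, Corollary 4.3, and the authors only record that the ``only if'' direction rests on simplicial volume together with further results from 3-manifold topology and \emph{group theory}, while the ``if'' direction is a sequence of elementary constructions. Your simplicial-volume opening matches the first ingredient, and your explicit maps on torus bundles and on $F\times S^1$ are correct. But the proposal has a genuine gap in the necessity direction: every tool you use is insensitive to connected sums with aspherical summands. The manifolds $T^3\,\#\,T^3$ and $(F\times S^1)\,\#\,L(p,q)$ have vanishing simplicial volume and vanishing Seifert volume, and each prime factor separately passes every test in your sketch; yet these manifolds lie in none of the classes (1)--(3), so the theorem asserts they admit no self-map of degree $>1$, and nothing in your argument rules them out. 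Your conclusion only constrains each prime factor, whereas the theorem requires showing that if some prime factor is aspherical then $M$ is prime; this is exactly where the group-theoretic input in Wang's proof ($\pi_1$-surjectivity up to finite index of nonzero-degree maps, Kurosh-type arguments for free products, Hopficity) enters, and you have no substitute for it. Relatedly, your Seifert-volume step is misapplied: Brooks--Goldman positivity concerns \emph{closed} $\widetilde{PSL}(2,\mathbb{R})$-manifolds, not individual JSJ pieces (a Seifert piece with boundary has no well-defined Euler number), so the premise ``vanishing vertex Euler numbers'' feeding your graph-manifold combinatorics --- which is itself asserted rather than proved, as you acknowledge --- is unjustified.

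There is also a flaw in the sufficiency direction: your closing step runs the wrong way. Degree $>1$ self-maps do not descend from a finite cover to its base, so constructing maps on torus bundles and on $F\times S^1$ gives nothing, by itself, on manifolds merely \emph{covered} by these --- torus semi-bundles, flat and Nil manifolds that are not bundles, and $H^2\times E^1$ manifolds that are not products. One must build the maps on $M$ directly (using, e.g., the zero-Euler-number Seifert fibration or the semi-bundle structure); indeed, producing such maps on torus semi-bundles is precisely what Section 4 of this paper does, and the answer there (only odd squares, for instance, in the Sol case) shows the degrees realized on the quotient genuinely differ from those realized on the cover, so no automatic transfer argument can work.
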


The proof of the "only if" part in Theorem \ref{before} is based on
the theory of simplicial volume, and various results on 3-manifold
topology and group theory. The proof of "if" part in Theorem
\ref{before} is a sequence of elementary constructions, which were
essentially known before.

Hence for any $M$ not listed in Theorem \ref{before},  $D(M)$ is
either $\{0,1,-1\}$ or $\{0,1\}$, which depends on whether $M$
admits a self map of degree $-1$ or not. To determine $D(M)$ for
geometrizable 3-manifolds listed in Theorem \ref{before}, let's have
a close look of those 3-manifolds from geometric and topological
aspects.

Among Thurston's eight geometries, six of them belong to the list in
Theorem \ref{before}.
 3-manifolds in (1) are exactly
those supporting either $E^3$, or Sol or Nil geometries.  $E^3$
3-manifolds, Sol 3-manifolds, and some Nil 3-manifolds are torus
bundles or semi-bundles; Nil 3-manifolds which are not torus bundles
or semi-bundles are Seifert spaces having Euclidean orbifolds with
three singular points. 3-manifolds in (2) are exactly those support
$H^2\times E^1$ geometry; 3-manifolds supporting $S^3$ or $S^2\times
E^1$ geometries form a proper subset of (3).

For 3-manifold $M$ with $S^3$-geometry, $D(M)$ has been presented
recently in \cite{Du} in term of the orders of $\pi_1(M)$ and its
elements (and determined earlier in \cite{HKWZ} when the maps induce
automorphisms on $\pi_1$). Note an algorithm is given to calculate
the degree set of maps between $S^3$-manifolds in term of their
Seifert invariants \cite{MP}.

To determine $D(M)$ for the remaining geometrizable 3-manifolds $M$,
the main task is to solve the question for the following three
groups ($D(M)$ is rather easy to determine for Seifert manifold $M$
supporting $H^2\times E^1$ or $S^2\times E^1$ geometry):

(a) torus bundles and semi-bundles;

(b) Nil Seifert manifolds not in (a);

(c) connected sums of 3-manifolds in (3) do not supporting $S^3$ or
$S^2\times E^1$ geometries.

Indeed $D(M)$ for $M$ in (a) will be determined in this paper
(hopefully all the remaining cases will be solved in a forthcoming
paper by the authors and Hao Zheng).

\subsection{Main result.} In this paper we calculate $D(M)$ for 3-manifold
$M$ which is either a torus bundle or semi-bundle. To do this, we
need first to coordinate torus bundles and semi-bundles by integer
matrices in Propositions \ref{coord-1} and \ref{coord-2}, then state
the results of $D(M)$ in term of those matrices in Theorems
\ref{main-1} and \ref{main-2}.

{\ttfamily{C{\footnotesize ONVENTION}:}} (1) To simplify notions,
for a diffeomorphism $\phi$ on torus $T$, we also use $\phi$ to
present its isotopy class and its induced 2 by 2 matrix on
$\pi_1(T)$ for a given basis.

(2) Each 3-manifold $M$ is oriented, and each 3-submanifold of $M$
and its boundary have induced orientations.

(3) Suppose $S$ (resp. $P$) is a properly embedded surface (resp. an
embedded 3-manifold) in a 3-manifold $M$.  We use $M\setminus S$
(resp. $M\setminus P$) to denote the resulting manifold obtained by
splitting $M$ along $S$ (resp. removing $\text{int} P$, the interior
of $P$).

\begin{defn} A {\it torus bundle} is $M_\phi = T \times I
/(x,1) \sim (\phi(x),0)$ where $\phi$ is a self-diffeomorphism of
the torus $T$ and $I$ is the interval $[0,1]$.

For a torus bundle $M_\phi$, we can isotopic $\phi$ to be a linear
diffeomorphism, which means $\phi \in GL_2 (\mathbb{Z})$ while not
changing $M_\phi$. Since we consider the orientation preserving case
only, $\phi$ must be in the special linear group $SL_2(\mathbb{Z})$.
\end{defn}

\begin{prop}\label{coord-1}
(1) $M_\phi$ admits $E^3$ geometry if and only if $\phi$ is
periodical, or equivalently $\phi$ is conjugate to one of the
following matrices $\left(
                                                  \begin{array}{cc}
                                                    1 & 0 \\
                                                    0 & 1 \\
                                                  \end{array}
                                                \right)$,
                                                $\left(
                                                  \begin{array}{cc}
                                                    -1 & 0 \\
                                                    0 & -1 \\
                                                  \end{array}
                                                \right)$,
                                                 $\left(
                                                  \begin{array}{cc}
                                                    -1 & -1 \\
                                                    1 & 0 \\
                                                  \end{array}
                                                \right)$,
                                                $\left(
                                                  \begin{array}{cc}
                                                    0 & -1 \\
                                                    1 & 0 \\
                                                  \end{array}
                                                \right)$,
                                                $\left(
                                                  \begin{array}{cc}
                                                    0 & -1 \\
                                                    1 & 1 \\
                                                  \end{array}
                                                \right)$ of finite
                                                 order
                                                1,2,3,4 and 6 respectively;

(2) $M_\phi$ admits Nil geometry if and only if $\phi$ is reducible,
 or equivalently $\phi$ is conjugate to $\pm\left(
                           \begin{array}{cc}
                             1 & n \\
                             0 & 1 \\
                           \end{array}
                         \right)$ where $n\neq 0$;

(3) $M_\phi$ admits Sol geometry if and only if $\phi$ is Anosov or
equivalently $\phi$ is conjugate to $\left(
                           \begin{array}{cc}
                             a & b \\
                             c & d \\
                           \end{array}
                         \right)$ where $|a+d|>2, ad-bc=1$.
\end{prop}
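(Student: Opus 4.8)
The plan is to run the argument on two parallel tracks---algebra and geometry---and then match them up. First I would settle the purely algebraic trichotomy of $SL_2(\mathbb{Z})$ by trace. Writing $t=a+d$ for the trace of $\phi$, the characteristic polynomial is $\lambda^2-t\lambda+1$, so the three cases $|t|<2$, $|t|=2$, $|t|>2$ correspond respectively to complex (root-of-unity) eigenvalues, a repeated eigenvalue $\pm1$, and two distinct real irrational eigenvalues (the discriminant $t^2-4$ is a non-square positive integer when $|t|>2$). In the elliptic case $|t|<2$ the only integer values are $t\in\{-1,0,1\}$; since a finite-order element is diagonalizable with eigenvalues $\zeta,\zeta^{-1}$ for a root of unity $\zeta$, the constraint $\zeta+\zeta^{-1}\in\mathbb{Z}$ forces the order into $\{1,2,3,4,6\}$, and a short normal-form computation shows every periodic element is conjugate to one of the five listed matrices. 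This gives the equivalence ``periodic $\Leftrightarrow$ conjugate to one of the five matrices'' in (1) and the ``$|t|>2$'' description of the Anosov case in (3); and, together with the observation that $t=\pm2$ with $\phi\neq\pm I$ leaves a single invariant eigendirection, it yields the reducible normal form $\pm\begin{pmatrix}1&n\\0&1\end{pmatrix}$, $n\neq0$, in (2).

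Second, I would identify the geometry in each case by exhibiting $\pi_1(M_\phi)=\mathbb{Z}^2\rtimes_\phi\mathbb{Z}$ as a lattice in the appropriate model group. When $\phi$ is periodic of order $n$, the $n$-fold cyclic cover along the base circle is $T^2\times S^1=T^3$, so $M_\phi$ is finitely covered by the flat $3$-torus and hence carries $E^3$ geometry. When $\phi$ is Anosov, the two real eigendirections of $\phi$ together with the suspension direction give an explicit faithful action of $\pi_1(M_\phi)$ on the Sol model $\mathbb{R}^3$ by isometries, so $M_\phi$ is a Sol manifold. When $\phi$ is reducible, $M_\phi$ is a circle bundle over the torus whose fiber is the $\phi$-invariant $\mathbb{Z}$-direction of the shear $\begin{pmatrix}1&n\\0&1\end{pmatrix}$, equivalently a Seifert space over a Euclidean base orbifold with nonzero Euler number, and such manifolds carry Nil geometry; on the group level $\pi_1(M_\phi)$ is a lattice in the Heisenberg group Nil.

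Finally, I would upgrade these three implications to equivalences using the uniqueness of Thurston geometries. Every element of $SL_2(\mathbb{Z})$ lies in exactly one of the three algebraic classes, each class produces exactly one of $E^3$, Nil, Sol, and a closed oriented $3$-manifold supports at most one Thurston geometry. Since moreover $M_\phi$ is always aspherical with infinite polycyclic fundamental group, the geometries $S^3$, $S^2\times E^1$, $H^3$, $H^2\times E^1$ and $\widetilde{PSL}(2,\mathbb{R})$ are all excluded (the first two by asphericity and infinitude, the latter three because their lattices are not polycyclic). Hence $M_\phi$ carries one of $E^3$, Nil, Sol and no other, and the three ``only if'' directions follow from the three ``if'' directions by this mutual exclusivity.

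I expect the main obstacle to be the geometric identification in the reducible case: one must verify that the Euler number of the Seifert fibration is genuinely nonzero---so that the structure is Nil rather than $E^3$---and that the distinguished fiber is exactly the $\phi$-invariant $\mathbb{Z}$-summand recorded by the shear parameter $n$. The periodic and Anosov cases are comparatively transparent, being respectively a finite cover to $T^3$ and a direct Sol chart built from the eigendirections, whereas the Nil case hinges on reading the Seifert data off $n$ correctly and confirming that it encodes a nontrivial central extension.
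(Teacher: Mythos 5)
Your proposal is correct in outline, but note that the paper does not actually prove this proposition: its entire proof is the single line ``see \cite{Ha}'', so your argument is necessarily a different route --- a self-contained proof of what the paper treats as a quotable fact. What you wrote is essentially the standard argument hiding behind that citation: the trace trichotomy in $SL_2(\mathbb{Z})$ (finite order when $|t|<2$ or $\phi=\pm I$; the shear normal form when $|t|=2$, $\phi\neq\pm I$; Anosov when $|t|>2$), an explicit geometric structure in each case, and then mutual exclusivity of Thurston geometries on closed manifolds to convert the three implications into three equivalences --- that last upgrading step is exactly the right logical economy, since it spares you from ruling out each geometry case by case. Two refinements would tighten the sketch. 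First, in the periodic case the shortcut ``finitely covered by $T^3$, hence flat'' silently invokes a nontrivial rigidity statement (Bieberbach--Zassenhaus plus Waldhausen's theorem, or geometrization); it is cleaner, and more consistent with your own declared strategy of exhibiting $\pi_1(M_\phi)$ as a lattice, to observe that each of the five normal forms acts isometrically on a suitable flat torus (the square torus for order $4$, the hexagonal torus for orders $3$ and $6$), so the mapping torus carries an explicit flat metric. Second, for $\phi=-\begin{pmatrix}1&n\\0&1\end{pmatrix}$ and for Anosov $\phi$ with negative trace, $\pi_1(M_\phi)$ is a lattice in the full isometry group $\mathrm{Isom}(\mathrm{Nil})$, respectively $\mathrm{Isom}(\mathrm{Sol})$, rather than in the nilpotent or solvable Lie group itself (the point group must contain the rotation by $\pi$); this costs nothing but should be said, since ``admits the geometry'' means a free quotient by a discrete subgroup of the full isometry group. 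With these two points made explicit, your argument is a complete and correct proof, and it has the advantage over the paper's bare citation of making the matrix normal forms, the nonvanishing of the Seifert Euler number, and the role of geometry uniqueness all visible.
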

\begin{proof}
 see  \cite{Ha}.
 \end{proof}
\begin{defn} Let $K$ be  the Klein bottle and
$N=K\widetilde{\times}I$ be the twisted $I$-bundle over $K$. A {\it
torus semi-bundle} $N_\phi=N\bigcup_\phi N$ is obtained by gluing
two copies along their  torus boundary $\partial N$ via a
diffeomorphism $\phi$. Note $N_\phi$ is foliated by tori parallel to
$\partial N$ with a Klein bottle at the core of each copy of $N$.
\end{defn}

Let $(x,y,z)$ be the coordinate of $S^1\times S^1\times I$. Then
$N=S^1\times S^1\times I/\tau$, where $\tau$ is an orientation
preserving involution such that  $\tau(x,y,z)=(x+\pi,-y,1-z)$, and
we have the double covering $p: S^1\times S^1\times I\to N$.  Let
$C_x$ and $C_y$ be the two circles on $S^1\times S^1\times \{1\}$
defined by $y$ to be constant and $x$ to be constant, see Figure 1.
\begin{center}
\psfrag{x}[]{$x$} \psfrag{y}[]{$y$} \psfrag{z}[]{$z$}
\includegraphics[height=1.5in]{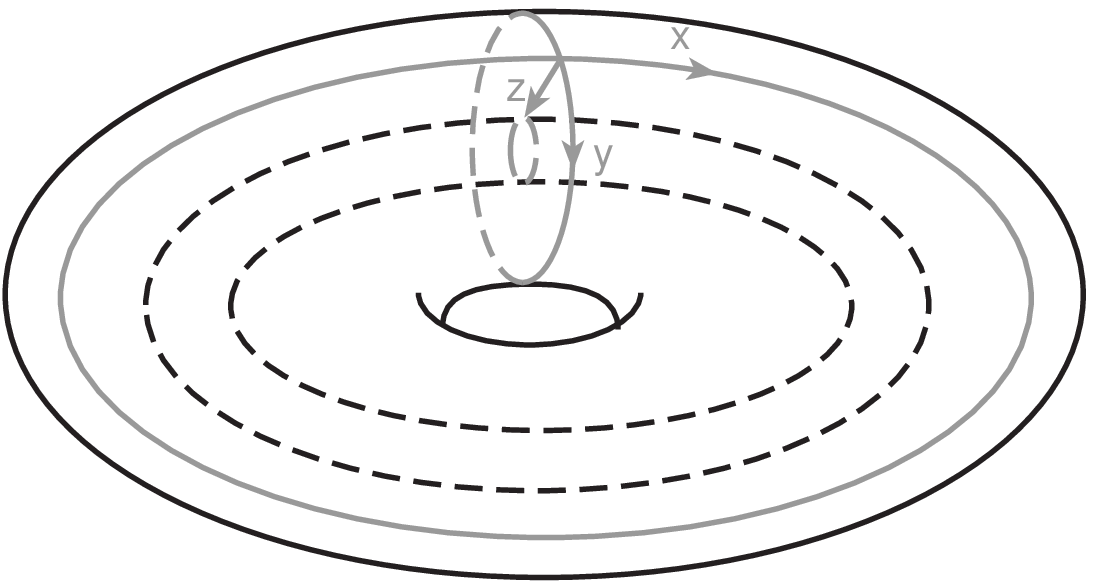}

\vskip 0.5 truecm \centerline{Figure 1: Coordinates of $S^1\times
S^1\times I$}
\end{center}
 Denote by $l_0=p(C_x)$ (0 slope) and
$l_\infty= p(C_y)$ ($\infty$ slope) on $\partial N$. {\it A
canonical coordinate} is an orientation of $l_0\cup l_\infty$, hence
there are four choices of canonical coordinate on $\partial N$. Once
canonical coordinates on each $\partial N$ are chosen, $\phi$  is
identified with an element $\left(\begin{array}{cc}
                                                    a & b \\
                                                    c & d \\
                                                  \end{array}
                                                \right)$ of $GL_2(\mathbb{Z})$ given by
$\phi$ $(l_0,l_\infty)$= $(l_0,l_\infty)$ $\left(\begin{array}{cc}
                                                    a & b \\
                                                    c & d \\
                                                  \end{array}
                                                \right)$.

\begin{prop}\label{coord-2}
With suitable choice of canonical coordinates of $\partial N$, we
have:

(1) $N_\phi$ admits $E^3$ geometry if and only if $\phi=\left(
                                                  \begin{array}{cc}
                                                    1 & 0 \\
                                                    0 & 1 \\
                                                  \end{array}
                                                \right)$ or
                                                 $\left(
                                                  \begin{array}{cc}
                                                    0 & 1 \\
                                                    1 & 0 \\
                                                  \end{array}
                                                \right)$;

(2) $N_\phi$ admits Nil geometry if and only if $\phi=\left(
\begin{array}{cc}
1 & 0 \\
z & 1 \\
\end{array}
\right)$, $\left(
\begin{array}{cc}
0 & 1 \\
1 & z \\
\end{array}
\right)$ or $\left(
\begin{array}{cc}
1 & z \\
0 & 1 \\
\end{array}
\right)$ where $z\neq 0$;

(3) $N_\phi$ admits Sol geometry if and only if $\phi=\left(
                           \begin{array}{cc}
                             a & b \\
                             c & d \\
                           \end{array}
                         \right)$ where $abcd\neq 0, ad-bc=1$.

Moreover a torus semi-bundle $N_\phi$ is also a torus bundle if and
only if
                                                $\phi=\left(
\begin{array}{cc}
1 & 0 \\
z & 1 \\
\end{array}
\right)$ under suitable choice of canonical coordinates

\end{prop}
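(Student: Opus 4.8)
The plan is to reduce the whole statement to the bundle classification of Proposition~\ref{coord-1} by passing to a canonical torus-bundle double cover, and to read off all three geometries from a single trace. First I would record the two Seifert fibrations carried by $N=K\widetilde{\times}I$. Lifting to $p\colon S^1\times S^1\times I\to N$, the foliation by the circles $\{y,z=\mathrm{const}\}$ is $\tau$-invariant and descends to a Seifert fibration of $N$ over the disc with two cone points of order $2$, with regular fiber $l_0$ (the two exceptional fibers arising from the two fixed points of the induced involution $(y,z)\mapsto(-y,1-z)$ on the base annulus), while the foliation by the circles $\{x,z=\mathrm{const}\}$ descends to a Seifert fibration over the M\"obius band with regular fiber $l_\infty$. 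Gluing two copies, $N_\phi$ is Seifert fibered exactly when $\phi$ sends a fiber slope to a fiber slope, i.e.\ when some entry of $\phi=\left(\begin{smallmatrix}a&b\\c&d\end{smallmatrix}\right)$ vanishes; hence the Seifert/non-Seifert dichotomy is precisely $abcd=0$ versus $abcd\neq0$.

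Next I would build the double cover explicitly. Writing $\widetilde N_i=T^2\times I$ with free deck involution $\tau_i$, the $\tau$-equivariant lift of the gluing produces a closed manifold $\widehat{N_\phi}$ gotten from $T^2\times[0,2]$ by identifying its ends; this is a torus bundle that doubly covers $N_\phi$, and on $H_1(T^2)$ its monodromy is, up to conjugacy and inversion, $(J\phi J)^{-1}\phi$, where $J=\mathrm{diag}(1,-1)$ records $\tau_*$. A direct computation gives $\widehat{N_\phi}$ monodromy matrix $\epsilon\left(\begin{smallmatrix}ad+bc&2bd\\2ac&ad+bc\end{smallmatrix}\right)$ with $\epsilon=\det\phi=\pm1$, so its trace is $2\epsilon(ad+bc)$. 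Since a closed aspherical $3$-manifold carries the same one of the geometries $E^3,\mathrm{Nil},\mathrm{Sol}$ as any finite cover, Proposition~\ref{coord-1} applied to $\widehat{N_\phi}$ yields the trichotomy: if $abcd\neq0$ then $|ad+bc|\geq2$, the monodromy is Anosov, and $N_\phi$ is Sol, giving~(3); if $abcd=0$ then necessarily $|ad+bc|=1$, the trace is $\pm2$, and $N_\phi$ is flat or Nil according to whether the monodromy is $\pm I$ or parabolic. The normal forms in~(1) and~(2) then follow by normalizing $\phi$ under the coordinate changes $\phi\mapsto D_2\phi D_1$ (with $D_i\in\{\mathrm{diag}(\pm1,\pm1)\}$, from reorienting canonical coordinates) together with $\phi\mapsto\phi^{-1}$ (from interchanging the two copies of $N$); a finite check on the sign patterns sends each flat case to $I$ or $\left(\begin{smallmatrix}0&1\\1&0\end{smallmatrix}\right)$ and each Nil case to one of the three parabolic forms.

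For the ``Moreover'' statement I would split the two implications with a homological invariant. Using $\pi_1(N)=\langle t,\beta\mid t\beta t^{-1}=\beta^{-1}\rangle$ with $l_0=t^2$ and $l_\infty=\beta$, so that in $H_1(N)=\mathbb Z\oplus\mathbb Z/2$ one has $l_0\mapsto(2,0)$ and $l_\infty\mapsto(0,1)$, a Mayer--Vietoris computation for $N_\phi=N_1\cup_\phi N_2$ gives, after tensoring with $\mathbb Q$, the relations $t_1=a\,t_2$ and $b\,t_2=0$; thus $b_1(N_\phi)=1$ when $b=0$ and $b_1(N_\phi)=0$ when $b\neq0$. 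As every torus bundle has $b_1\geq1$, a semi-bundle that is also a torus bundle must have $b=0$, and the coordinate moves above then bring $\phi$ to $\left(\begin{smallmatrix}1&0\\z&1\end{smallmatrix}\right)$. Conversely, for $\phi=\left(\begin{smallmatrix}1&0\\z&1\end{smallmatrix}\right)$ I would produce the fibration directly: the projection $T^2\times I\to S^1_x$ is $\tau$-equivariant and descends to an annulus fibration $N\to S^1$ whose fiber meets $\partial N$ in two curves of slope $l_\infty$, and since $\phi$ fixes $l_\infty$ and fixes $l_0$ modulo $l_\infty$, the two annulus fibrations match along $\partial N$ and assemble into a torus fibration of $N_\phi$ over $S^1$.

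The main obstacle I expect is the basis-and-orientation bookkeeping in the cover: correctly tracking that $\tau$ interchanges the two boundary tori, so that the gluing on one end is $J$-conjugate to that on the other, and confirming the resulting monodromy lies in $SL_2(\mathbb Z)$ with the stated trace. Two secondary points need care: justifying that a non-matching gluing yields \emph{Sol} rather than a non-geometric graph manifold, which is exactly what the Anosov cover furnishes; and noting in the normal-form and ``Moreover'' arguments that no self-homeomorphism of $N$ exchanges $l_0$ and $l_\infty$ (their Seifert bases $D^2(2,2)$ and the M\"obius band are non-homeomorphic), so $b$ and $c$ play genuinely asymmetric roles and the vanishing of $b$ alone detects the torus-bundle case.
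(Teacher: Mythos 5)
Your proposal is correct, and its central computation coincides with the paper's: the paper's Lemma \ref{lm2} constructs the same torus-bundle double cover and shows its monodromy is $\tau\phi\tau\phi^{-1}$, whose trace $2(ad+bc)=2(2bc+1)$ has absolute value greater than $2$ exactly when $abcd\neq 0$; your matrix $(J\phi J)^{-1}\phi$ is conjugate to the inverse of that one, so the two computations agree, and your reduction to normal forms via $\phi\mapsto D_2\phi^{\pm 1}D_1$ is the same reduction the paper performs by quoting Theorem \ref{thm23} (only the easy direction of that theorem---that these moves preserve $N_\phi$---is needed, and that is what you use). The genuine differences are two. First, to separate $E^3$ from Nil the paper stays inside Seifert theory: the forms with $abcd=0$ give Seifert manifolds with Euler number $z$, and $e=0$ versus $e\neq 0$ decides the geometry; you instead read both geometries off the parabolic-versus-$\pm I$ monodromy of the double cover, which is more uniform but leans on the fact that $E^3$/Nil/Sol geometry descends from a finite cover to its base---the paper invokes this only for Sol, while you need it for all three (a standard fact, so a presentational cost rather than a gap; similarly your ``Seifert iff some entry vanishes'' dichotomy is asserted without proving the only-if direction, but it is never actually load-bearing in your argument). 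Second, and more substantially, your proof of the ``moreover'' clause is genuinely different: the paper derives it from its analysis of essential tori (Lemma \ref{lm5}, which rests on the corrected case analysis in the proof of Theorem \ref{thm23}), showing that a non-separating essential torus forces case (a) and hence $b=0$; you instead compute $b_1(N_\phi)$ by Mayer--Vietoris ($b_1=1$ if $b=0$ and $b_1=0$ otherwise), note that any torus bundle has $b_1\geq 1$, and assemble the fibration explicitly from the $l_\infty$-annulus fibrations when $b=0$. Your route is more elementary and bypasses the surface theory entirely; what the paper's route buys is reusable infrastructure---Lemma \ref{lm5} and the case structure in the proof of Theorem \ref{thm23} are exactly what Section 4 needs to compute $D(N_\phi)$, so the paper's detour is not wasted, whereas your shortcut would still leave that machinery to be built.
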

We will prove Proposition \ref{coord-2} in Section 2.

\begin{thm}\label{main-1}
Using matrix coordinates given by Proposition \ref{coord-1},
$D(M_\phi)$ is listed in table 1 for torus bundle $M_\phi$, where
$\delta (3)=\delta (6)=1, \delta (4)=0$.
\begin{table}[!h]
\begin{center}
\begin{tabular}{|c|c|c|}
  \hline
  $M_\phi$ & $\phi$ & $D(M_\phi)$\\
  \hline
  $E^3$ & finite order $k=1,2$ & $\mathbb{Z}$\\
  \hline
  $E^3$  & finite order $k=3,4,6$ & $\{(kt+1)(p^2-\delta (k) pq+q^2)|\ t,p,q\in\mathbb{Z}\}$ \\
  \hline
  Nil  &  $\pm\left(\begin{array}{cc}1 & 0 \\n & 1 \\ \end{array}\right), n\neq 0$ & $\{l^2|\ l\in\mathbb{Z}\}$ \\
  \hline
  Sol &  $\left(\begin{array}{cc}a & b \\c & d \\ \end{array}\right), |a+d|>2$ & $\{p^2+\frac{(d-a)pr}{c}-\frac{br^2}{c}|\ p,r\in\mathbb{Z},$ \\ & & either $\frac{br}{c}, \frac{(d-a)r}{c} \in
\mathbb{Z}$ or
 $\frac{p(d-a)-br}{c} \in \mathbb{Z}\}$ \\
  \hline
\end{tabular}
\end{center}
\caption{degrees of self maps of orientable torus bundles}
\end{table}
\end{thm}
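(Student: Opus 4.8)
The plan is to pass to the algebra of $\pi_1$. Each $M_\phi$ is aspherical, hence a $K(\pi,1)$ with $\pi=\pi_1(M_\phi)=\mathbb{Z}^2\rtimes_\phi\mathbb{Z}=\langle a,b,t \mid [a,b]=1,\ tat^{-1}=\phi(a),\ tbt^{-1}=\phi(b)\rangle$; homotopy classes of self-maps correspond to conjugacy classes of endomorphisms of $\pi$, and the degree is a homological invariant of the endomorphism. First I would pin down the canonical fibred structure. For the Sol case and the $E^3$ cases of order $k=3,4,6$ one has $\det(\phi-I)\neq 0$ (namely $2-(a+d)\neq 0$, respectively $\Phi_k(1)\neq 0$), so $H_1(M_\phi)=\mathbb{Z}^2/(\phi-I)\mathbb{Z}^2\oplus\mathbb{Z}$ has free rank one; thus $\mathrm{Hom}(\pi,\mathbb{Z})\cong\mathbb{Z}$ and the bundle projection $\pi\to\mathbb{Z}$ is, up to sign, the unique surjection to $\mathbb{Z}$. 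Consequently its kernel, the fibre subgroup $\mathbb{Z}^2$, is characteristic under every endomorphism of non-zero base degree (those with $m=0$ send $t$ into the fibre and have degree $0$). Writing $f_*|_{\mathbb{Z}^2}=A\in M_2(\mathbb{Z})$ and $f_*(t)=vt^m$, the only constraint is $A\phi=\phi^m A$, every such $(m,A)$ is realised by an explicit fibre-preserving map (the relation $A\phi=\phi^m A$ is exactly the gluing compatibility across $(x,1)\sim(\phi x,0)$), and the product formula for fibre-preserving maps gives $\deg f=m\det A$. This reduces Theorem \ref{main-1}, in these cases, to describing $\{\,m\det A: A\phi=\phi^m A\,\}$.

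For the Sol case $\phi$ has real eigenvalues $\lambda,\lambda^{-1}$ with $|\lambda|>1$; an invertible $A$ conjugates $\phi$ to $\phi^m$, forcing $\mathrm{tr}\,\phi^m=\mathrm{tr}\,\phi$ and hence $m=\pm 1$, while $\det A=0$ always yields degree $0$. (Here $b,c\neq 0$ automatically, else $\phi$ would have integer eigenvalues of product $1$ and trace $\pm 2$.) I would then solve the two linear systems over $\mathbb{Z}$: $A\phi=\phi A$ for $m=1$, and $A\phi=\phi^{-1}A$ for $m=-1$. Parametrising $A$ by $p$ (a diagonal entry) and $r$ (an off-diagonal entry), a direct computation gives in both cases $\det A=\pm\!\left(p^2+\tfrac{(d-a)pr}{c}-\tfrac{br^2}{c}\right)$, together with the integrality constraints $\tfrac{br}{c},\tfrac{(d-a)r}{c}\in\mathbb{Z}$ in the commuting case and $\tfrac{p(d-a)-br}{c}\in\mathbb{Z}$ in the anticommuting case. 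Since $\deg f=m\det A$, the sign of $m$ cancels the sign of $\det A$, and the union of the two solution sets is exactly the set displayed in Table 1.

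For the $E^3$ cases $k=3,4,6$ the characteristic polynomial of $\phi$ is the cyclotomic polynomial $\Phi_k$, irreducible over $\mathbb{Q}$, so $\mathbb{Z}^2$ is a torsion-free rank-one module over the principal ideal domain $\mathcal{O}_k=\mathbb{Z}[\zeta_k]$; identify $\mathbb{Z}^2\cong\mathcal{O}_k$ with $\phi$ acting as multiplication by $\zeta_k$. The relation $A(\zeta_k x)=\zeta_k^{\,m}A(x)$ says $A$ is $\sigma$-semilinear for the automorphism $\sigma:\zeta_k\mapsto\zeta_k^{\,m}$; since $(\mathbb{Z}/k)^\times=\{\pm 1\}$ this is possible with $A\neq 0$ only when $m\equiv\pm 1\pmod k$ (otherwise $A=0$ and $\deg f=0$). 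For $m\equiv 1$, $A$ is $\mathcal{O}_k$-linear, i.e. multiplication by some $w\in\mathcal{O}_k$, so $\det A=N(w)=p^2-\delta(k)pq+q^2$; for $m\equiv -1$, $A$ is conjugate-linear and $\det A=-N(w)$. In either branch $\deg f=m\det A$, and writing $m=kt\pm 1$ the signs combine to give precisely $\{(kt+1)(p^2-\delta(k)pq+q^2)\}$.

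The remaining cases are handled differently. For Nil the fibre torus is not characteristic (here $\pi$ is nilpotent and $\mathrm{Hom}(\pi,\mathbb{Z})$ has rank two), so I would instead use the canonical Seifert circle-fibration: $M_\phi$ fibres over a flat $2$-orbifold with non-zero Euler number $e$, and this structure is unique, hence preserved up to homotopy. A self-map multiplies the regular fibre by some integer $c_0$ and induces a degree-$b$ self-map of the base orbifold, with $\deg f=c_0 b$; invariance of the Euler number gives $c_0 e=b\,e$, i.e. $c_0=b$, so $\deg f=b^2$ and $D(M_\phi)=\{l^2\}$, every square being realised by a fibre-preserving map. For the $E^3$ cases $k=1,2$ ($\phi=\pm I$), $M_\phi$ is finitely covered by $T^3$ and carries self-maps of every degree by direct construction, so $D(M_\phi)=\mathbb{Z}$. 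I expect the main obstacle to be the structural step—showing that non-zero-degree maps are forced, up to homotopy, into the canonical fibred form via the characteristic-subgroup arguments—together with the Sol bookkeeping ensuring that the two integrality alternatives match exactly the $m=\pm 1$ solution spaces; realizability itself is handled cleanly by the explicit fibre-preserving constructions.
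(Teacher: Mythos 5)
Your proposal is correct, but it reaches Table 1 by a genuinely different route in the two cases that carry the real content. Where the paper proves its key structural result (Theorem \ref{lm4}) topologically --- homotope $f$ so that $g^{-1}(T)$ is incompressible via \cite[Lemma 6.5]{He}, invoke Hatcher's classification of incompressible surfaces in torus bundles (Lemma \ref{lm3}) to make $g^{-1}(T)$ a union of fibers, then cut along it to extract the relation $A\phi_*=(\phi^{\epsilon})_*^k A$ with $\deg = k\epsilon\det A$ --- you obtain the same relation (with $m=\epsilon k$) purely algebraically: asphericity identifies homotopy classes of self-maps with conjugacy classes of endomorphisms of $\pi_1$, and when $\det(\phi-I)\neq 0$ (Sol and the $E^3$ cases $k=3,4,6$) the computation $\mathrm{Hom}(\pi,\mathbb{Z})\cong\mathbb{Z}$ makes the fiber subgroup $\mathbb{Z}^2$ characteristic, which forces every nonzero-degree endomorphism into the fibered form $(m,A)$ with $A\phi=\phi^m A$. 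This avoids Haken-theoretic input entirely for these cases; the paper's topological lemma, by contrast, is uniform over all non-reducible $\phi$ and doubles as the template for the semi-bundle argument of Section 4, which your method does not replace. Your treatment of the periodic cases via $\mathbb{Z}^2\cong\mathbb{Z}[\zeta_k]$, semilinearity, and the norm form $N(p+q\zeta_k)$ is cleaner and more conceptual than the paper's case-by-case matrix solving, and it gives the same sets (note $p^2+pq+q^2$ and $p^2-pq+q^2$ represent the same integers, so the $k=6$ answer agrees with the table). The Sol bookkeeping ($c\neq 0$ automatically, trace argument forcing $m=\pm 1$, the two integrality alternatives matching $\epsilon=\pm1$) and the Nil and $k=1,2$ cases coincide with the paper's arguments (Seifert fibration with $e\neq 0$ plus the Euler-number transformation law, and direct construction, respectively). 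One small inaccuracy that does not affect the proof: your parenthetical justification for abandoning the fiber-subgroup method in the Nil case ($\pi$ nilpotent, $\mathrm{Hom}(\pi,\mathbb{Z})$ of rank two) holds only for $\phi=+\left(\begin{array}{cc}1&0\\n&1\end{array}\right)$; for the $-$ sign $H_1$ has free rank one and $\pi$ is not nilpotent, but the Seifert/Euler-number argument you actually use applies to both signs, as in the paper.
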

\begin{thm}\label{main-2}
Using matrix coordinates given by Proposition \ref{coord-2},
$D(N_\phi)$ is listed in table 2 for torus semi-bundle $N_\phi$,
where $\delta(a,d)= \frac{ad}{gcd(a,d)^2}$.
\begin{table}[!h]
\begin{center}
\begin{tabular}{|c|c|c|}
  \hline
  $N_\phi$ & $\phi$& $D(N_\phi)$\\
   \hline
  $E^3$ & $\left(\begin{array}{cc}1 & 0 \\0 & 1 \\ \end{array}\right)$ & $\mathbb{Z}$ \\
  \hline
  $E^3$ & $\left(\begin{array}{cc}0 & 1 \\1 & 0 \\ \end{array}\right)$ & $\{2l+1|\ l\in\mathbb{Z}\}$ \\
  \hline
  Nil & $\left(\begin{array}{cc}1 & 0 \\z & 1 \\ \end{array}\right), z\neq 0$ & $\{l^2|\ l\in\mathbb{Z}\}$ \\
  \hline
  Nil & $\left(\begin{array}{cc}0 & 1 \\1 & z \\
  \end{array}\right)$ or $\left(\begin{array}{cc}1 & z \\0 & 1 \\ \end{array}\right), z\neq 0$ & $\{(2l+1)^2|\ l\in\mathbb{Z}\}$ \\
  \hline
  Sol & $\left(\begin{array}{cc}a & b \\c & d \\ \end{array}\right), abcd\neq 0, ad-bc=1$&
  $\{(2l+1)^2|\ l\in\mathbb{Z}\}$, if $\delta(a,d)$ is even\ or\\ & &
       $\{(2l+1)^2|\ l\in\mathbb{Z}\}\bigcup\{(2l+1)^2\cdot \delta(a,d)$\\ & &
$|\ l\in\mathbb{Z}\}$, if $\delta(a,d)$ is odd \\
  \hline
\end{tabular}
\end{center}
\caption{degrees of self maps of torus semi-bundles}
\end{table}
\end{thm}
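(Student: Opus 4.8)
The plan is to prove both inclusions --- that every degree is one of those listed (the obstruction), and that each listed integer is realized (the construction) --- organized around the geometry of a single half $N=K\widetilde{\times}I$. The essential structural input is that $N$ carries exactly two Seifert fibrations: one with regular fibre $l_0$ over the disk orbifold $D^2(2,2)$ (two exceptional fibres of order $2$), and one with regular fibre $l_\infty$ over the M\"obius band (no exceptional fibres, the core loop reversing the fibre orientation), corresponding to the relation $bab^{-1}=a^{-1}$ in $\pi_1(N)=\pi_1(K)=\langle a,b\mid bab^{-1}=a^{-1}\rangle$. Globally $N_\phi$ has the central gluing torus $T=\partial N$ together with the two core Klein bottles $K_1,K_2$ as canonical incompressible surfaces. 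First I would show that any self-map $f$ with $\deg f\neq0$ is homotopic to one carrying $T$ to $T$ and each copy of $N$ into a copy of $N$ (so $f$ either preserves or interchanges the two halves), and that on each half $f$ is homotopic to a fibre-preserving map for a suitable one of the two Seifert structures; this uses the homotopy-rigidity of the canonical torus and Klein bottles for geometrizable manifolds underlying Theorem \ref{before}, handling the Seifert ($E^3$, Nil) and the Sol cases by the same reduction.

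The heart of the argument is a local computation on one half. For a fibre-preserving self-map of $N$ the degree factors as (degree on the fibre)$\times$(orbifold degree on the base), and I would read off from this the matrix $A$ induced by $f$ on $H_1(\partial N)=\langle l_0,l_\infty\rangle$. The two order-$2$ cone points of $D^2(2,2)$, respectively the orientation-reversing core of the M\"obius band, force $A$ into a restricted (essentially triangular) form and constrain the admissible fibre multiplicities by a parity condition. I would record precisely which of the two structures is compatible with the gluing in each row of Proposition \ref{coord-2}: the fibre matched by $\phi$ is $l_\infty$ for $\phi=\bigl(\begin{smallmatrix}1&0\\z&1\end{smallmatrix}\bigr)$, is $l_0$ for $\phi=\bigl(\begin{smallmatrix}1&z\\0&1\end{smallmatrix}\bigr)$, and is matched across the two halves by the transposition for $\phi=\bigl(\begin{smallmatrix}0&1\\1&z\end{smallmatrix}\bigr)$, while in the Sol case $abcd\neq0$ the curves $l_0,l_\infty$ are genuinely transverse to both fibrations.

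Next I would impose the gluing compatibility. Writing $A_1,A_2$ for the boundary matrices induced by $f$ on the two halves in their canonical coordinates, the requirement that $f$ descend across $T$ intertwines $A_1$ and $A_2$ through $\phi$ (with the two halves either fixed or swapped); combined with the local constraints this becomes an explicit integer system whose solutions, fed into the degree formula, produce the tables. The organizing principle I expect to emerge is clean: in the ``untwisted'' situation, where $N_\phi$ is simultaneously a torus bundle (precisely $\phi=\bigl(\begin{smallmatrix}1&0\\z&1\end{smallmatrix}\bigr)$ by the last clause of Proposition \ref{coord-2}), the parity obstruction disappears and one gets $\mathbb{Z}$ in the flat case and all squares $\{l^2\}$ in the Nil case (matching the corresponding torus-bundle rows of Table 1); in every genuine semi-bundle the $\mathbb{Z}_2$-twist through the Klein bottle forces \emph{odd} multiplicities, yielding $\{2l+1\}$ in the flat case $\phi=\bigl(\begin{smallmatrix}0&1\\1&0\end{smallmatrix}\bigr)$ and $\{(2l+1)^2\}$ in the remaining Nil cases. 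For the Sol rows, transversality of the two fibrations makes $\deg f$ the product of the two (transverse) fibre multiplicities; symmetry between the halves collapses this to a single odd parameter $2l+1$, and the lattice index between the matched fibre directions across the Anosov gluing $\phi$ enters as the factor $\delta(a,d)=ad/\gcd(a,d)^2$.

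I expect the Sol case, and specifically the parity dichotomy of $\delta(a,d)$, to be the main obstacle. On the obstruction side one must show that no fibre-preserving map can realize a multiplicity incompatible with $\delta(a,d)$, which requires careful bookkeeping of orientations and of the parities of the two covering degrees across the Anosov gluing --- this is exactly what distinguishes the case $\delta(a,d)$ even (only $\{(2l+1)^2\}$) from $\delta(a,d)$ odd (the additional family $\{(2l+1)^2\,\delta(a,d)\}$). On the realization side one must, for odd $\delta(a,d)$, build an explicit fibre-preserving self-map whose restrictions to the two halves have the correct boundary behaviour and glue up across $\phi$ to a map of degree exactly $(2l+1)^2\,\delta(a,d)$ while producing no even analogue; verifying its degree reduces to the same index computation. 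The remaining realizations --- the integers and odd integers in the flat cases, and the squares and odd squares in the Nil cases --- I would obtain once and for all from standard vertical (fibre-direction) self-covers composed with horizontal covers of the base orbifold and with reflections, then reuse.
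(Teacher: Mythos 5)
You have the right skeleton for the case where $f^{-1}(T)$ is a single torus --- the parity/diagonality constraint on the boundary matrices coming from $\pi_1(K)$, the gluing compatibility through $\phi$, and the realization constructions all parallel the paper's argument --- but your structural reduction is false, and it suppresses the part of the proof where most of the work lies. You claim every non-zero degree self-map is homotopic to one carrying $T$ to $T$ and each copy of $N$ into a copy of $N$. What is actually available (Hempel's Lemma 6.5 plus Lemma \ref{lm5}) is only that $f$ is homotopic to $g$ with $g^{-1}(T)$ a \emph{union} of $k\geq 1$ fiber tori, and moreover possibly fiber tori of a different (isomorphic) semi-bundle structure, a case that really occurs for Nil manifolds. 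The domain then splits into two copies of $N$ and $k-1$ copies of $T\times I$, and the degree acquires the factor $k$: $l=k\cdot\epsilon\cdot\det A_1$. Maps with $k>1$ are genuinely distinct homotopy classes (they correspond to non-conjugate covering subgroups; e.g.\ a degree-$9$ self-map of the flat semi-bundle wrapping three times in the base direction is not homotopic to any map with connected $T$-preimage), so your obstruction argument simply does not see them, and you cannot exclude that wrapping maps produce degrees outside the tables. The paper controls $k$ explicitly: in the Sol case it proves $k=1$ by lifting $g$ to the torus-bundle double cover (Theorem \ref{thm25}) and invoking the Anosov rigidity argument of Section 3, while in the Nil and $E^3$ cases it solves the $\tau_*$-twisted matrix equations for arbitrary $k$ and $\epsilon=\pm1$, finding for instance that $k$ must be odd and that the resulting degree $k^2(2m'+1)^2$ is still an odd square. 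None of this bookkeeping is optional.

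A second, related gap is your treatment of the Sol row. When $abcd\neq 0$ the manifold $N_\phi$ carries no Seifert fibration at all --- the two fibrations of the halves do not match across $\phi$ --- so ``fibre-preserving self-map'' and ``$\deg f$ is the product of the two transverse fibre multiplicities'' have no global meaning; this is exactly why the paper replaces your orbifold-degree computation by a purely algebraic lemma: any proper non-zero degree map $N\to N$ induces $\left(\begin{smallmatrix}2m+1&0\\0&n\end{smallmatrix}\right)$ on $H_1(\partial N)$ in the basis $(l_0,l_\infty)$, proved from the presentation $\pi_1(N)=\langle a,b\mid a=bab\rangle$. The dichotomy in $\delta(a,d)$ then falls out of solving $A_2\cdot\phi_*=\phi_*^{-1}\cdot A_1$ with both $A_i$ of that form: the relation $(2m'+1)a=(2m+1)d$ forces the degree to be $u^2\cdot ad/\gcd(a,d)^2$ with $u$, $a/\gcd(a,d)$ and $d/\gcd(a,d)$ all odd, whence realizability exactly when $\delta(a,d)$ is odd. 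Your ``lattice index'' heuristic points at the right quantity, but as written (``careful bookkeeping of orientations and parities'') it restates the theorem rather than proving it.
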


\subsection{Remark on orientation reversing homeomorphisms.}

Suppose $M$ is a torus bundle or semi-bundle. Then any non-zero
degree map is homotopic to a covering (\cite{Wa} Cor 0.4). Hence if
$-1\in D(M)$ (which is computable by Theorems \ref{main-1} and
\ref{main-2}), then $M$ admits an orientation reversing self
homeomorphism.

If $M$ is a torus semi-bundle, or $M$ supports the geometry of
either $E^3$ or Nil, then when $M$ admits an orientation reversing
self homeomorphism is explicitly presented in the following:

\begin{col}\label{or-r}
(1) A torus semi-bundle $N_\phi$ admits an orientation reversing
homeomorphism if and only if $\phi$ is either $\left(
\begin{array}{cc}
1 & 0 \\
0 & 1 \\
\end{array}
\right)$, or $\left(
\begin{array}{cc}
0 & 1 \\
1 &  0\\
\end{array}
\right)$, or $\left(
                           \begin{array}{cc}
                             a & b \\
                             c & -a \\
                           \end{array}
                         \right)$ where $abc\neq 0$.

(2) A torus bundle $M_\phi$ supporting $E^3$ geometry admits an
orientation reversing homeomorphism if and only if $\phi$ is either
$\left(
\begin{array}{cc}
1 & 0 \\
0 & 1 \\
\end{array}
\right)$, or $\left(
\begin{array}{cc}
-1 & 0 \\
0 &  -1\\
\end{array}
\right)$.

(3) If $M$ supports Nil geometry, then $M$ admits no orientation
reversing homeomorphism.
\end{col}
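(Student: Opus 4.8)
The plan is to reduce the whole corollary to the single question of whether $-1\in D(M)$, and then to read off the answer from Theorems \ref{main-1} and \ref{main-2}. The key point, already recorded in the paragraph preceding the statement, is the equivalence
\[
M \text{ admits an orientation reversing homeomorphism} \iff -1\in D(M).
\]
One implication is trivial: a homeomorphism has degree $\pm1$, and an orientation reversing one has degree $-1$, so $-1\in D(M)$. For the converse I would invoke \cite{Wa} Cor 0.4, which says that on a torus bundle or semi-bundle every non-zero degree self-map is homotopic to a covering; a self-map of degree $-1$ is then homotopic to a one-sheeted covering, i.e.\ to an orientation reversing homeomorphism. Each of the three parts therefore becomes a matter of deciding whether $-1$ lies in an explicitly listed degree set.

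For part (1) I would run through the five rows of Table 2. In the two $E^3$ rows the degree set is $\mathbb{Z}$ (for $\phi=I$) and the odd integers $\{2l+1\}$ (for the swap matrix), each of which contains $-1$; these give the first two families in the statement. In all three Nil rows the degree set consists of squares, so $-1$ is excluded. The only case needing real work is the Sol row. There $-1$ can lie only in the branch $\{(2l+1)^2\,\delta(a,d)\}$, which is present exactly when $\delta(a,d)$ is odd; since $(2l+1)^2\ge1$ this forces $(2l+1)^2=1$ and $\delta(a,d)=-1$. Writing $g=\gcd(a,d)$ and $a=ga'$, $d=gd'$ with $\gcd(a',d')=1$ gives $\delta(a,d)=a'd'$, so $\delta(a,d)=-1$ is equivalent to $d'=-a'$, i.e.\ to $d=-a$; this value $-1$ is indeed odd, so the branch is available. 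Under $d=-a$ the Sol constraint $abcd\ne0$ becomes $abc\ne0$, yielding precisely the matrices $\phi=\left(\begin{smallmatrix}a&b\\c&-a\end{smallmatrix}\right)$ with $abc\ne0$ in the corollary.

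For part (2) I would inspect the two $E^3$ rows of Table 1. For the orders $k=1,2$ (the matrices $I$ and $-I$) the degree set is $\mathbb{Z}\ni-1$. For $k=3,4,6$ the degree set is $\{(kt+1)(p^2-\delta(k)pq+q^2)\}$; the quadratic factor is positive definite, hence a non-negative integer, so a product equal to $-1$ would require that factor to equal $1$ and $kt+1=-1$, i.e.\ $kt=-2$, which has no integer solution when $k\in\{3,4,6\}$. Thus $-1\in D(M_\phi)$ precisely for $k=1,2$, which is the stated dichotomy. Part (3) is then immediate: in the Nil row of Table 1 and in both Nil rows of Table 2 the degree set is a set of squares ($\{l^2\}$ or $\{(2l+1)^2\}$), hence contains no negative integer, so $-1\notin D(M)$ and no orientation reversing homeomorphism exists.

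The only step that is more than a table lookup is the Sol computation in part (1): the identity $\delta(a,d)=-1\iff d=-a$ together with the parity check that $\delta=-1$ is odd (so that the second branch of the Sol degree set is actually present). I expect this elementary number-theoretic verification, rather than the topological reduction, to be the main point requiring attention.
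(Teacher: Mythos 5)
Your proposal is correct and takes essentially the same approach the paper intends: the paper states this corollary without a separate proof, relying exactly on the reduction to deciding whether $-1\in D(M)$ (via \cite{Wa}, Cor.~0.4, as recorded in the paragraph preceding the statement) together with a case-by-case lookup in Tables 1 and 2. Your verification, including the one nontrivial computation that $\delta(a,d)=-1$ if and only if $d=-a$ in the Sol row (so that the Sol semi-bundles admitting degree $-1$ are precisely those with $\phi=\left(\begin{array}{cc} a & b \\ c & -a \end{array}\right)$, $abc\neq 0$), supplies exactly the details the paper leaves implicit.
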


For torus bundle with given Anosov monodromy, even we can calculate
whether $-1 \in D(M_\phi)$, but there seems no explicit description
as in Corollary \ref{or-r}. (The referee informed us that there is a
 convenient description of when $-1\in D(M_\phi)$, see Lemma 1.7,
\cite{Sa})

\begin{exm} For the torus bundle $M_\phi$, $\phi=\left(
                           \begin{array}{cc}
                             2 & 1 \\
                             1 & 1 \\
                           \end{array}\right)$,  $-1 \in D(M_\phi)$.
Indeed for $\phi=\left(\begin{array}{cc}
                             a & b \\
                             c & d \\
                           \end{array}\right)$, if  $|a+d|=3$,
                           then $-1 \in
                           D(M_\phi)$.
Since $p^2+\frac{d-a}{b} pr-\frac{c}{b} r^2=-1$ has solution
$p=1-d$, $r=b$
             when $a+d=3$, and solution $p=-1-d$, $r=b$ when $a+d=-3$.
\end{exm}

\begin{exm} For the torus bundle $M_\phi$, $\phi=\left(
                           \begin{array}{cc}
                             2 & 3 \\
                             1 & 2 \\
                           \end{array}\right)$,  $-1 \notin D(M_\phi)$.
Indeed for $\phi=\left(\begin{array}{cc}
                             a & b \\
                             c & d \\
                           \end{array}\right)$, if  $a+d\pm 2$ has prime decomposition $p_1^{e_1}...p_n^{e_n}$ such that
                           $p_i=4l+3$ and $e_i=2m+1$ for some $i$,
                            then $-1 \notin
                           D(M_\phi)$. Since if the equation $p^2+\frac{d-a}{b} pr-\frac{c}{b}
                           r^2=-1$ has integer solution,
                           $\frac{((a+d)^2-4)r^2-4b^2}{b^2}$ should
                           be a square of rational number. That is $((a+d)^2-4)r^2-4b^2=s^2$ for some integer
                           $s$.
           Therefore $(a+d+2)(a+d-2)r^2$ is a sum of two squares. By
           a fact in elementary number theory,
            neither $a+d+2$ nor $a+d-2$ has $4k+3$ type prime factor with odd power (see  page 279, \cite{IR}).
\end{exm}

\begin{exm} Note if $-1 \in D(M)$, then $k\in D(M)$
implies $-k\in D(M)$. For the torus bundle $M_\phi$, $\phi=\left(
                           \begin{array}{cc}
                             2 & 1 \\
                             1 & 1 \\
                           \end{array}\right)$, among the first 20
                            integers $>0$, exactly $1, 4, 5, 9,
                           11 , 16, 19, 20\in D(M_\phi)$.
\end{exm}

\subsection {Organization of the paper.} Theorems \ref{main-1} and \ref{main-2}
will be proved in Sections 3 and 4 respectively. To prove these
theorems, we need have a careful look of the structures of torus
bundle and semi-bundles.  This is carried out in Section 2.

We explain more about Section 2. The most convenient and useful
reference for us is "Notes on basic 3-manifold topology" by Hatcher
\cite{Ha}, which is not formally published, but widely circulated
(see \newblock http://www.math.cornell.edu/$\sim$hatcher/.) In
particular Chapter 2 of \cite{Ha} is devoted to the study of torus
bundles and semi-bundles. Theorems \ref{thm21} and \ref{thm23} about
classifications of torus bundles and semi-bundles are quoted from
\cite{Ha} directly. It seems that the proof of Theorem \ref{thm23}
in \cite{Ha} missed an existed and rather complicated case, so we
rewrite a proof for it (most parts still follow that in \cite{Ha}).
Lemma \ref{lm5} studies incompressible surfaces in torus
semi-bundle, which relies on the proof of Theorem \ref{thm23}. Then
Proposition \ref{coord-2} is proved by using Theorem \ref{thm23},
Lemma \ref{lm5}, and Lemma \ref{lm2} which presents the relation
between gluing maps of a torus semi-bundles and its torus bundle
double covers. Finally, Theorem \ref{thm25} studies lifting of maps
between torus semi-bundles to their torus bundle double covers.

\section{Structures of orientable torus bundles and semi-bundles}

\subsection{Some elementary facts.}

All facts in this sub-section are known, and one can find them in
\cite{He}, or more directly in \cite{Ha}.

\begin{defn}
Suppose an oriented 3-manifold $M'$ is a circle bundle with a given
section $F$, where $F$ is a compact surface with boundary components
$c_1,...,c_n,...c_{n+b}$ with $n>0$. On each boundary component of
$M'$, orient $c_i$ and the circle fiber $l_i$ so that the product of
their orientations match with the induced orientation of $M'$. Now
attaching $n$ solid tori $S_i$ to the first $n$ boundary tori of
$M'$ so that the meridian of $S_i$ is identified with slope
$r_i=a_ic_i+ b_il_i$ with $a_i>0$. Denote the resulting manifold by
$M$ which has the Seifert fiber structure extended from the circle
bundle structure of $M'$.

We will denote this Seifert fibering of $M$ by $M(\pm
g,b;r_1,\cdots,r_s)$ where $g$ is the genus of the section $F$ of
$M$, with the sign $+$ if $F$ is orientable and $-$ if $F$ is
nonorientable,  here 'genus' of nonorientable surfaces means the
number of $RP^2$ connected summands. When $b=0$, call $e(M)=\sum_1^s
r_i$ the Euler number of the Seifert fiberation.
\end{defn}

\begin{center}
\psfrag{p}[]{$\rho$} \psfrag{o}[]{$l_0$} \psfrag{i}[]{$l_\infty$}
\psfrag{k}[]{$K$} \psfrag{t}[]{$T$} \psfrag{a}[]{$(a)$}
\psfrag{b}[]{$(b)$}
\includegraphics[height=1.5in]{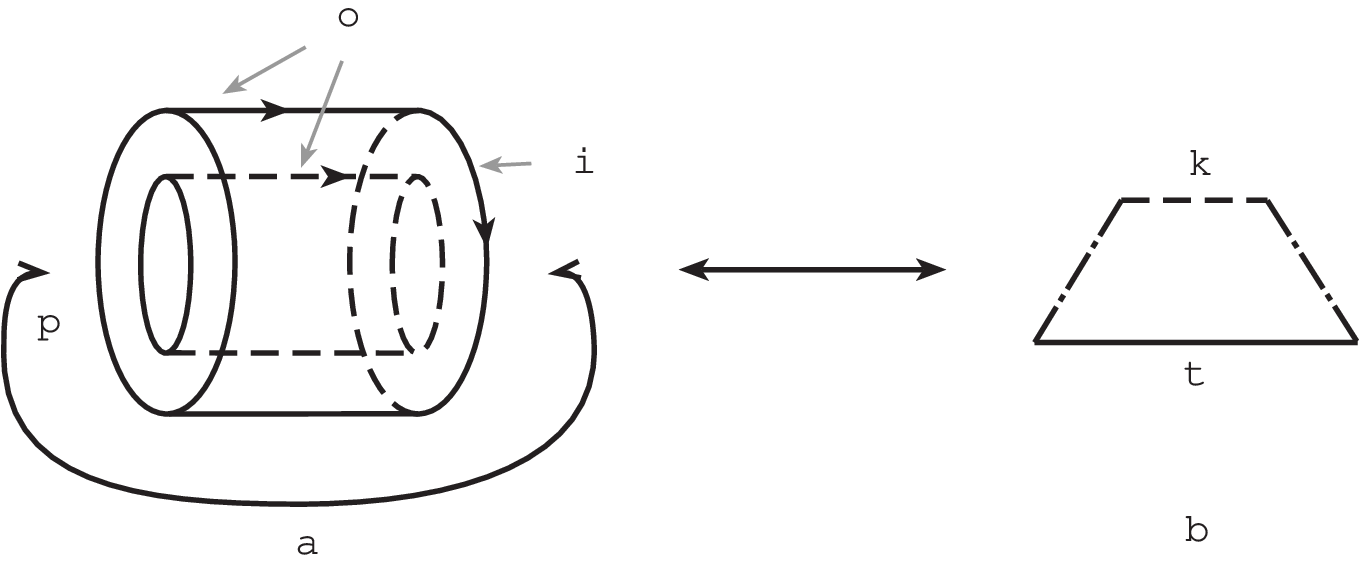}\\

\vskip 0.5 truecm
 \centerline{Figure 2: Coordinates of $\partial N$}
\end{center}

Another view of $N$ described in Figure 2(a): $N$ is obtained from
$S^1\times I \times I$ by identifying $S^1\times I \times \{0\}$
with $S^1\times I \times \{1\}$ via a diffeomorphism $\rho$ which
reflects both the $S^1$ and $I$ factors. Figure 2(b) is a schematic
picture of $N$ which will be used in the paper.

We list some properties of $N$ as:

\begin{lem} \label{basic}

(1) $N$ has two types of Seifert fiber structures:

I: $M(0,1;\frac{1}{2},-\frac{1}{2})$ in which  $l_0$  on $\partial
N$ is a regular fiber and $l_\infty$ is the boundary of the section
defining the Seifert invariant.

II: $M(-1,1;)$ in which  $l_\infty$ on $\partial N$ is a regular
fiber and $l_0$ is the boundary of the section defining the Seifert
invariant.

(2) $N$ has three types of essential (orientable, incompressible,
$\partial$-incompressible) surfaces:

I. a torus parallel to $\partial N$.

II. an annulus whose boundary is $l_\infty$  in $\partial N$ (Figure
3(a)) which does not separate  $N$.

III. an annulus whose boundary is $l_0$ in $\partial N$ (Figure
3(b)) which separates  $N$.

(3) Suppose $M$ is a torus bundle or semi-bundle and $F$ is a closed
incompressible surface in $M$, then $F$ is union of parallel tori.
\end{lem}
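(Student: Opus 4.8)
The plan is to reduce the closed incompressible surface $F$ to a controlled position relative to the canonical torus coming from the (semi-)bundle structure, and then to read off its isotopy type from the pieces into which it is cut. Since a torus bundle or semi-bundle is irreducible and aspherical, $F$ contains no sphere component, and (being two-sided and incompressible) each component is genuinely essential. For a torus bundle $M_\phi$ I would take $T_0=T\times\{0\}$, the fiber, which is incompressible; for a semi-bundle $N_\phi=N\cup_\phi N$ I would take the central gluing torus $T=\partial N$. First I would isotope $F$ to meet this torus transversely and then to \emph{minimal position}: using irreducibility and innermost-disk surgery, remove every circle of the intersection that bounds a disk on $F$ or on the torus, and remove boundary-parallel circles, so that what remains is a family of essential circles, all of one slope on the torus and essential on $F$.

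Next I would cut along the canonical torus and classify the resulting pieces. In the bundle case, $M_\phi\setminus T_0\cong T\times I$, and the pieces of $F$ are properly embedded, incompressible and $\partial$-incompressible surfaces in the product $T\times I$; by the standard classification each such piece is either a horizontal torus $T\times\{t\}$ or a disjoint union of vertical essential annuli $c\times I$, with $c$ an essential curve of the common slope. In the semi-bundle case, $N_\phi\setminus T\cong N\sqcup N$, and each piece of $F$ is an essential surface in $N$; by Lemma \ref{basic}(2) it is a torus parallel to $\partial N$ or an annulus of slope $l_0$ or $l_\infty$. Thus in either case every piece of $F$ is a boundary-parallel torus or an annulus, so that every piece has zero Euler characteristic.

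Finally I would reassemble $F$. If $F$ is disjoint from the canonical torus it lies in a single piece and is a horizontal (resp.\ $\partial$-parallel) torus, hence parallel to a leaf of the tori foliation. Otherwise $F$ is built entirely from annuli glued along their boundary circles, and since gluing two surfaces along a circle adds their Euler characteristics while $\chi(\text{circle})=0$, we get $\chi(F)=0$; as $F$ is closed and two-sided in an orientable manifold, every component is a torus. The components, being disjoint incompressible tori carrying a single common slope, are then seen to be pairwise parallel (and in the Sol case the absence of a $\phi$-invariant rational slope forces $F$ to be disjoint from the canonical torus, recovering the fiber-parallel tori).

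The main obstacle is the minimal-position step: one must justify, using irreducibility together with the incompressibility of both $F$ and the canonical torus, that all trivial and boundary-parallel intersection circles can be removed, so that the cut pieces are genuinely essential and the product classification, resp.\ Lemma \ref{basic}(2), applies. In the semi-bundle case there is additional bookkeeping, namely determining which annulus slopes ($l_0$ or $l_\infty$) on the two copies of $N$ can be matched across $T$ by $\phi$, which is precisely where the coordinate normal forms of Proposition \ref{coord-2} enter.
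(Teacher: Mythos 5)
The paper never proves this lemma at all: it is presented as a list of known facts, with the reader sent to Hempel \cite{He} and, more directly, to Hatcher \cite{Ha}, so the only meaningful comparison is between your sketch and those standard arguments. For part (3) your outline is indeed the standard one: put $F$ in essential position with respect to the canonical torus (the fiber $T\times\{0\}$ in the bundle case, $\partial N$ in the semi-bundle case), cut, classify the pieces in $T\times I$ as horizontal tori or vertical annuli, respectively classify the pieces in $N$ by part (2), and reassemble using $\chi(F)=0$ and two-sidedness to conclude each component is a torus. That skeleton is sound, and your observation that in the Anosov case no rational slope is $\phi$-invariant, so $F$ must miss the canonical torus, is correct.

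However, the proposal has genuine gaps. First, it proves only one of the three assertions: parts (1) and (2) are nowhere addressed, and part (2) is in fact consumed as an input to your argument for part (3). Since the statement to be established is all of (1)--(3), you would still need to exhibit the two Seifert fibrations of $N=K\widetilde{\times}I$ (coming from the two circle fibrations of the Klein bottle) and carry out the classification of essential surfaces in $N$ itself (for instance by passing to the double cover $T\times I$, or by cutting $N$ along the vertical annuli of Figure 3); neither step is shorter than the part you did write out. Second, your last paragraph appeals to Proposition \ref{coord-2} for the slope bookkeeping in the semi-bundle case. In this paper that is circular: Proposition \ref{coord-2} is deduced from Theorem \ref{thm23} and Lemma \ref{lm5}, both of which rest on the present lemma. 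The bookkeeping must be done directly from $\phi$ (it either carries one of $l_0,l_\infty$ to one of $l_0,l_\infty$, or no annulus pieces can close up), with no appeal to the later normal forms. Third, the concluding claim that the resulting tori are pairwise parallel is asserted rather than argued; in the reducible (Nil and $E^3$) cases the tori assembled from vertical annuli are not isotopic to fibers, so parallelism of the components to one another needs its own short argument (disjoint vertical tori lie over disjoint, hence parallel, essential circles of the common invariant slope, and a horizontal component cannot coexist disjointly with a vertical one).
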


\begin{center}
\psfrag{p}[]{$\rho$} \psfrag{q}[]{$\rho$} \psfrag{a}[]{$(a)$}
\psfrag{b}[]{$(b)$}
\includegraphics[height=1.5in]{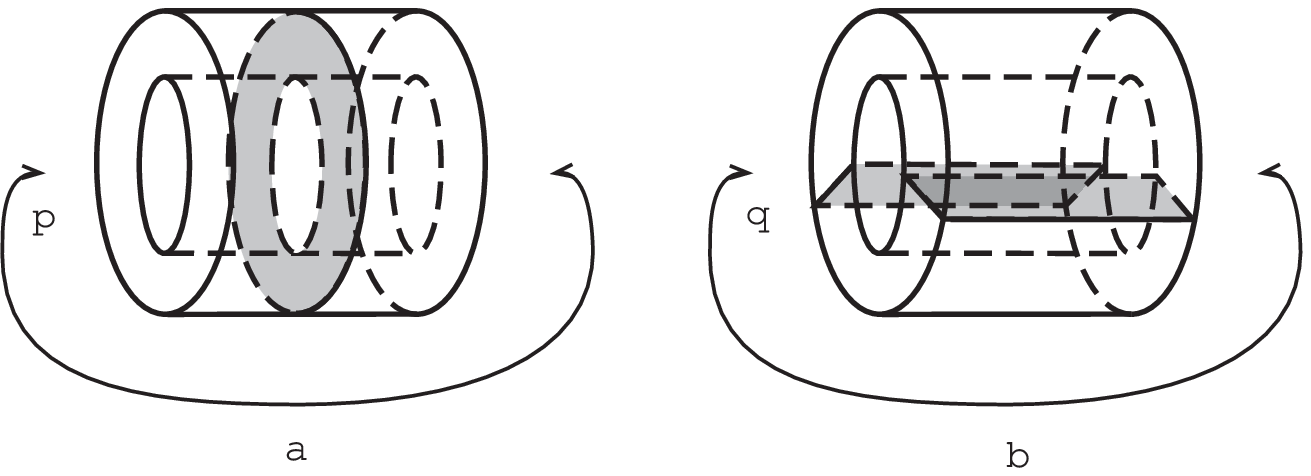}\\
\vskip 0.5 truecm \centerline{Figure 3: Essential surface in $N$}
\end{center}

\subsection {Classifications of torus bundles and semi-bundles.}

 Orientable torus bundles and semi-bundles are classified by two
 theorems
 below.
\begin{thm}[\cite{GS}; \cite{Ha}, Theorem 2.6] \label{thm21}
An orientable torus bundle $M_\phi$ is diffeomorphic to $M_\psi$ if
and only if $\phi$ conjugates to $\psi^{\pm 1}$ in
$GL_2(\mathbb{Z})$.
\end{thm}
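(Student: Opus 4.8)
The statement is the standard fact that a torus bundle is the mapping torus of its monodromy and is classified by the conjugacy-up-to-inversion class of that monodromy. I would prove the two implications separately, the forward one by explicit construction and the converse by making a diffeomorphism fiber-preserving.

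For the ``if'' direction I would exhibit explicit fiber-preserving diffeomorphisms. If $\phi = A\psi A^{-1}$ with $A\in GL_2(\mathbb{Z})$, realize $A$ by a linear diffeomorphism of $T$ and set $H(x,t)=(Ax,t)$ on $T\times I$; a one-line check that $H(\psi x,0)$ and $H(x,1)$ represent the same point of $M_\phi$ (using $\phi A=A\psi$) shows $H$ descends to a diffeomorphism $M_\psi\to M_\phi$. If instead $\phi=A\psi^{-1}A^{-1}$, compose with interval reversal: $H(x,t)=(Ax,1-t)$ descends, the reversal of the base circle being exactly what replaces the monodromy by its inverse. This half is elementary.

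For the ``only if'' direction, suppose $f:M_\psi\to M_\phi$ is a diffeomorphism; the plan is to make $f$ fiber-preserving and then read off the conjugacy. The fiber $T_\psi\subset M_\psi$ is incompressible, so $f(T_\psi)$ is a closed incompressible torus in $M_\phi$, which by Lemma \ref{basic}(3) is a torus of the bundle foliation. When $\phi$ is Anosov (the Sol case of Proposition \ref{coord-1}) there is, up to isotopy, a unique incompressible torus, so after an ambient isotopy I may assume $f(T_\psi)=T_\phi$. Then $f$ restricts to a diffeomorphism of the complements, each of which is $T\times I$; since a diffeomorphism of $T\times I$ is isotopic to a level-preserving one, its two end-maps $g_0,g_1:T\to T$ are isotopic and induce the same $A=g_{0*}=g_{1*}\in GL_2(\mathbb{Z})$. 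Compatibility of $f$ with the two gluings forces $g_0\circ\psi\simeq\phi\circ g_1$, hence $A\psi=\phi A$ on $\pi_1$ and $\phi=A\psi A^{-1}$; if $f$ reverses the base orientation the same bookkeeping gives $\phi=A\psi^{-1}A^{-1}$.

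The hard part is that in the Nil and $E^3$ cases the fiber torus is \emph{not} unique up to isotopy: these manifolds fiber over $S^1$ in several inequivalent ways (already $T^3=M_{\mathrm{id}}$ has incompressible tori in many isotopy classes), so $f$ may carry $T_\psi$ to a torus not parallel to $T_\phi$, and one must instead show that every such fibration has monodromy conjugate to $\psi^{\pm1}$. Here I would argue algebraically: all these manifolds are closed Haken, so by Waldhausen's theorem $M_\phi\cong M_\psi$ is equivalent to $\pi_1M_\phi\cong\pi_1M_\psi$, where $\pi_1M_\phi=\mathbb{Z}^2\rtimes_\phi\mathbb{Z}$. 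Given an isomorphism of these groups I would identify the fiber subgroup $\mathbb{Z}^2$ by intrinsic properties (the unique maximal normal abelian subgroup in the Sol case; in the Nil and $E^3$ cases each normal $\mathbb{Z}^2$ with quotient $\mathbb{Z}$ corresponds to a fibration), check that the induced automorphism acts as $\pm1$ on the quotient, and then rerun the conjugacy bookkeeping on $\pi_1$. The one genuinely delicate point is the $E^3$ case, where the monodromies are finite order: one must verify that the finitely many finite-order conjugacy classes in $SL_2(\mathbb{Z})$ listed in Proposition \ref{coord-1} are rigid enough that any change of basis arising from an alternative fibration still conjugates $\psi$ to $\phi^{\pm1}$.
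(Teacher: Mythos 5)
You should know at the outset that the paper does not prove this theorem at all: it is quoted verbatim from \cite{GS} and \cite{Ha} (Theorem 2.6), with the paper only supplying a proof for the semi-bundle analogue (Theorem \ref{thm23}). So the comparison below is with the standard argument rather than with anything in the text. Your ``if'' direction is correct and complete, and your ``only if'' argument in the Anosov/Sol case is also correct: there the fiber is, up to isotopy, the unique incompressible torus (this is Lemma \ref{lm3}, which is the reference you actually need rather than Lemma \ref{basic}(3)), and the cut-and-reglue bookkeeping gives $A\psi_*=\phi_*^{\pm 1}A$.

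The gap is in the reducible cases, and it is genuine: for Nil and $E^3$ monodromies your text describes a plan (``identify the fiber subgroup by intrinsic properties \dots rerun the conjugacy bookkeeping \dots one must verify'') but never carries out the one step that is the actual content, namely that \emph{every} torus fibration of $M_\phi$ has monodromy conjugate to $\phi^{\pm 1}$. Rerunning the bookkeeping on an arbitrary normal $\mathbb{Z}^2\le\pi_1M_\phi$ with infinite cyclic quotient only produces the monodromy of \emph{some} fibration; without the missing verification you cannot conclude that it is conjugate to $\phi^{\pm 1}$, which is exactly what is to be proved. Moreover the delicacy is located in the wrong place. For $\phi$ of finite order $\neq 1$ (all $E^3$ bundles except $T^3$), and likewise for $\phi\sim-\left(\begin{smallmatrix}1&0\\n&1\end{smallmatrix}\right)$, one has $\det(\phi_*-I)\neq 0$, so the fiber subgroup maps into the torsion of $H_1$ and is therefore the kernel of $\pi_1\to H_1/\mathrm{torsion}\cong\mathbb{Z}$: it is characteristic, the fibration is unique, and your Sol-type argument applies verbatim---no rigidity of finite-order conjugacy classes is needed. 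The genuinely multi-fibered cases are exactly $\phi\sim\left(\begin{smallmatrix}1&0\\n&1\end{smallmatrix}\right)$ (including $T^3$, $n=0$), and there the gap must be closed by an explicit computation: writing $\pi_1=\langle a,b,t\mid [a,b]=1,\ tat^{-1}=ab^{n},\ tbt^{-1}=b\rangle$, any surjection to $\mathbb{Z}$ kills $b$ and sends $(a,t)\mapsto(p,q)$ with $\gcd(p,q)=1$; its kernel is the $\mathbb{Z}^2$ generated by $b$ and $c=a^{q}t^{-p}$, and a lift $s=a^{u}t^{v}$, $pu+qv=1$, of the quotient generator acts by $sbs^{-1}=b$ and $scs^{-1}=cb^{n(pu+qv)}=cb^{n}$, i.e.\ again by $\left(\begin{smallmatrix}1&0\\n&1\end{smallmatrix}\right)$ in the basis $(c,b)$. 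That one computation (which also disposes of $T^3$, where abelianness forces trivial monodromy) is precisely what your proposal is missing.
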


\begin{thm}[\cite{Ha}, Theorem 2.8] \label{thm23}
The torus semi-bundle $N_\phi$ is diffeomorphic to $N_\psi$ if and
only if $\phi=\left(\begin{array}{cc} \pm1 & 0 \\0 & \pm1\\
\end{array}\right) \psi^{\pm 1}\left(\begin{array}{cc} \pm1 & 0 \\0 & \pm1\\
\end{array}\right)$ in $GL_2(\mathbb{Z})$, with
independent choices of signs understood.
\end{thm}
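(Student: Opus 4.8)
The plan is to analyze both implications through the canonical splitting of a semi-bundle along the torus $T=\partial N_1=\partial N_2$, after first pinning down the group of boundary ambiguities coming from self-diffeomorphisms of a single piece $N$. Let $G\subset GL_2(\mathbb{Z})$ be the group of diagonal matrices $\mathrm{diag}(\varepsilon_1,\varepsilon_2)$ with $\varepsilon_i=\pm1$; this is exactly the set of matrices appearing in the statement. I first claim that the image of the restriction homomorphism from the mapping class group of $N$ to that of $\partial N=T$, which is $GL_2(\mathbb{Z})$, equals $G$. The inclusion into $G$ follows from Lemma \ref{basic}(2): the only essential annuli in $N$ have boundary slope $l_0$ or $l_\infty$, and these are topologically distinguished since the $l_0$-annulus separates $N$ while the $l_\infty$-annulus does not; hence any self-diffeomorphism of $N$ fixes each of $l_0,l_\infty$ up to orientation, and so acts on $(l_0,l_\infty)$ by an element of $G$. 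The reverse inclusion is realized explicitly in the model $N=S^1\times I\times I/\rho$ of Figure 2(a): the reflections of the $S^1$- and $I$-factors together with the $I$-bundle involution produce self-diffeomorphisms realizing each of the four sign patterns.

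For the "if" direction I would construct the diffeomorphisms directly. Given $A,B\in G$, choose self-diffeomorphisms $h_1,h_2$ of $N$ restricting to $A,B$ on the boundary as above; then $h_1\sqcup h_2$ descends to a diffeomorphism $N_\psi\to N_{A\psi B}$ (using that every element of $G$ is its own inverse, so the matching condition on $T$ reads $\phi=A\psi B^{-1}=A\psi B$), which realizes $\phi=A\psi B$. Independently, interchanging the two copies of $N$ gives a diffeomorphism $N_\psi\cong N_{\psi^{-1}}$ once each copy is re-identified with the standard $N$ via its canonical coordinates. Composing these two moves realizes every relation $\phi=A\psi^{\pm1}B$ with $A,B\in G$.

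For the "only if" direction, let $f:N_\psi\to N_\phi$ be a diffeomorphism. The heart of the argument is to isotope $f$ so that it carries the splitting torus $T_\psi$ onto $T_\phi$. By Lemma \ref{basic}(3) every closed incompressible surface in a torus semi-bundle is a union of parallel tori, and $T_\psi,T_\phi$ are incompressible; using the standard theory of isotoping incompressible surfaces in Haken manifolds I would make $f(T_\psi)$ disjoint from $T_\phi$ and then isotopic to it, so that after an isotopy $f(T_\psi)=T_\phi$. Then $f$ sends the two pieces of $N_\psi$ onto the two pieces of $N_\phi$, either preserving or interchanging them, and on each piece it is a self-diffeomorphism of $N$, hence an element of $G$ on the boundary by the first step. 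Matching the gluing data across $T$ then produces $\phi=A\psi B$ in the preserving case and $\phi=A\psi^{-1}B$ in the interchanging case, with $A,B\in G$, which is the desired conclusion.

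The main obstacle is the canonicity of the splitting torus in the degenerate cases, and this is precisely the situation that must be treated with care. When the two Seifert fibrations of the pieces match up, $N_\phi$ is itself Seifert fibered (the $E^3$ and Nil cases), and for the special form of $\phi$ in Proposition \ref{coord-2} the semi-bundle is in addition a genuine torus bundle; in these situations $T_\phi$ is no longer the unique incompressible torus, so distinct non-isotopic splittings could a priori produce different gluing matrices. Here I would either appeal to uniqueness of the Seifert fibration (respectively of the JSJ decomposition) to reduce $f$ to a fibre-preserving map, or compare the gluing matrices arising from the competing splittings directly and verify that they differ only by elements of $G$ on each side together with a possible inversion, so that the stated equivalence still holds. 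This extra case is the one apparently overlooked in the proof in \cite{Ha}, and closing it rigorously is the crux of the argument.
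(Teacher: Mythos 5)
The gap is in your ``only if'' direction. You assert that, by ``the standard theory of isotoping incompressible surfaces in Haken manifolds,'' you can make $f(T_\psi)$ disjoint from $T_\phi$ and hence isotopic to it. No such theory exists: two incompressible tori in a Haken manifold can intersect essentially and in general cannot be made disjoint, and that is exactly what happens in the cases that matter here. When the gluing map carries $l_0$ or $l_\infty$ to $l_0$ or $l_\infty$ (the $E^3$ and Nil cases of Proposition \ref{coord-2}), the semi-bundle contains essential tori that meet the canonical splitting torus essentially: they intersect each piece $N_i$ in essential annuli of type (II) or (III) of Lemma \ref{basic}(2) rather than in boundary-parallel tori, and they give splittings not isotopic to the canonical one. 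Note that Lemma \ref{basic}(3) only says a closed incompressible surface consists of tori parallel \emph{to one another}, not that these tori are isotopic to the fiber torus; the statement you would need is Lemma \ref{lm5}, whose proof in the paper relies on the hard part of Theorem \ref{thm23} itself, so invoking it here would be circular. Your main argument therefore proves the theorem only when $f(T_\psi)$ happens to be isotopic to $T_\phi$, which is essentially the Sol case.

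Your final paragraph correctly identifies this as the crux, but neither of your proposed fixes is carried out, and neither works as stated. The JSJ decomposition gives nothing: the problematic manifolds are Seifert fibered ($E^3$ or Nil), so their JSJ torus family is empty and provides no canonical splitting torus. ``Uniqueness of the Seifert fibration'' is also not a shortcut, since $N$ itself carries two non-isomorphic Seifert structures (Lemma \ref{basic}(1)), and in the paper's Case (1) the competing splitting of $N_\psi=M(-1,0;1/2,-1/2,z)$ decomposes it into pieces carrying the two \emph{different} structures $M(0,1;\frac{1}{2},-\frac{1}{2})$ and $M(-1,1;)$. What actually closes the gap --- and is the substance of the paper's proof --- is a direct combinatorial analysis of a torus built from annuli: using the cyclic ordering of the annulus boundary curves on $\partial N_1$ and $\partial N_2$ one shows that the case $\psi(l_\infty)=l_\infty$ is impossible (the glued-up surface would be a Klein bottle or a non-separating torus), while in the remaining cases $\psi(l_\infty)=l_0$ and $\psi(l_0)=l_0$ the torus yields a new semi-bundle splitting whose gluing matrix is forced --- by the Euler number of the inherited Seifert fibration in the first case, and by vertical/horizontal positioning in $M(0,0;1/2,1/2,-1/2,-1/2,z)$ in the second --- to agree with $\psi^{\pm 1}$ up to the diagonal sign matrices. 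Without this analysis the theorem is unproved; with it, your ``compare the competing splittings directly'' alternative becomes precisely the paper's argument.
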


\begin{proof} (We start the proof as that in \cite{Ha}.) Suppose $f:N_\phi \to N_\psi$ is a diffeomorphism and $T,T'$
are the torus fibers of $N_\phi,N_\psi$ respectively.
$N_\psi\setminus T'=N_1\bigcup N_2$ where $N_1$ , $N_2$ are
homeomorphic to $N$.

Since $f$ is a diffeomorphism, two components of $N_\psi\setminus
f(T)$ are both homeomorphic to $N$. We can isotope $f$, such that
every component of $f(T)\bigcap N_i$ is an essential surface in
$N_i,\ i=1,2$. So $f(T)\bigcap N_i$ is in the three types listed in
Lemma \ref{basic} (2). Thus either $f(T)$ is parallel to $T'$, or
$\psi$ takes $l_0$ or $l_\infty$ to $l_0$ or $l_\infty$.

Suppose $f(T)$ is parallel to $T'$. We can assume $f(T)=T'$. Then
$\phi$ must be obtained from $\psi$ by composing on the left and
right homeomorphisms of $\partial N$ which extend to homeomorphisms
of $N$. Such homeomorphisms must preserve both $l_0$ and $l_\infty$
(may reverse the directions), since $l_0$ is the unique slopes of
the boundaries of essential separating annulus and $l_\infty$ is the
unique slopes of the boundaries of essential non-separating annulus
in $N$. Theorem \ref{thm23} is proved in this situation.

Suppose $\psi$ takes $l_0$ or $l_\infty$ to $l_0$ or $l_\infty$.
Then there are three cases as below:

Case (1)  $\psi$ takes $l_\infty$ to $l_0$ (if $\psi$ takes $l_0$ to
$l_\infty$, then we consider $\psi^{-1}$),

Case (2) $\psi$ takes $l_\infty$ to $l_\infty$,

Case (3) $\psi$ takes $l_0$ to $l_0$.

(The proof in \cite{Ha} claims that only Case (3) is  possible,
while we show below that only Case (2) is impossible).

Case (1). Now $\psi$=$\left(
            \begin{array}{cc}
              z & 1 \\
              1 & 0 \\
            \end{array}
           \right)$, and $N_\psi=M(-1,0;1/2,-1/2,z)$, and $e(M)=z$. Note:

(i) $f(T)\bigcap N_1$ are $n$ parallel annuli $A_1,\dots,A_n$ of
type (II) (see Figure 4), which are located in a cyclic order in
$N$. Set $\partial A_i=a_i\bigcup a_i'$, then $2n$ circles
$a_1,\dots,a_n,a_1',\dots,a_n'$ are located in cyclic order in
$\partial N_1$.
\begin{center}
\psfrag{p}[]{$\rho$} \psfrag{a}[]{$a_i$} \psfrag{b}[]{$a_{i+1}$}
\psfrag{c}[]{$A_i$} \psfrag{d}[]{$A_{i+1}$}
\psfrag{e}[]{$a_{i}^{'}$} \psfrag{f}[]{$a_{i+1}^{'}$}
\includegraphics[height=1.5in]{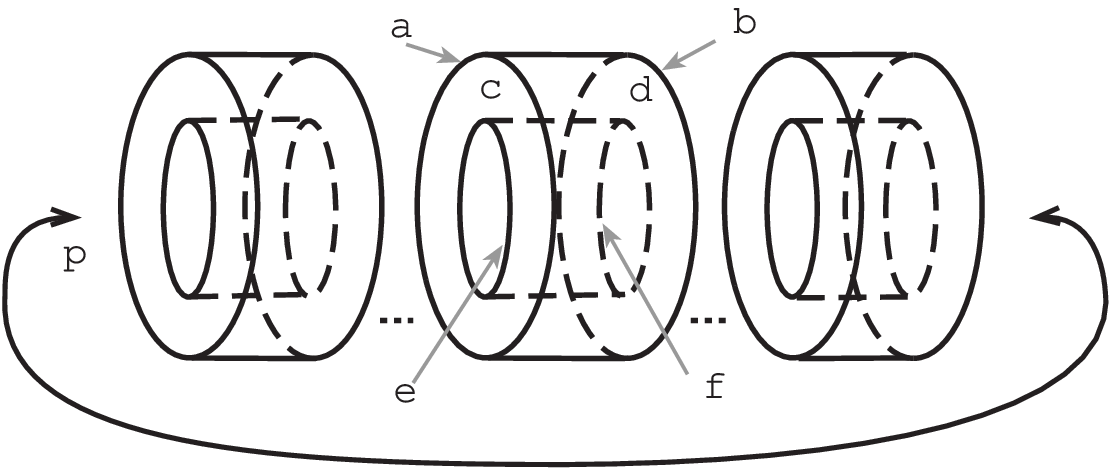}\\
\vskip 0.5 truecm
 \centerline{Figure 4: Cut $N_i$ through type (II) surfaces}
\end{center}

(ii) $f(T)\bigcap N_2$ are annuli $B_1,\dots,B_n$ of type (III)
 (see Figure 5), where $B_{i+1}$ is next
 to $B_i$, $i=1, ..., n-1$ in $N_2$. Set $\partial B_i=b_i\bigcup b_i'$
then $2n$ circles $b_1,\dots,b_n,b_n',\dots,b_1'$ are located in
cyclic order in $\partial N_2$.
\begin{center}
\psfrag{q}[]{$\rho$} \psfrag{a}[]{$b_1^{'}$} \psfrag{b}[]{$b_1$}
\psfrag{c}[]{$b_i^{'}$} \psfrag{d}[]{$b_i$} \psfrag{e}[]{$b_n^{'}$}
\psfrag{f}[]{$b_n$} \psfrag{k}[]{$B_1$} \psfrag{l}[]{$B_i$}
\psfrag{m}[]{$B_n$}
\includegraphics[height=2.5in]{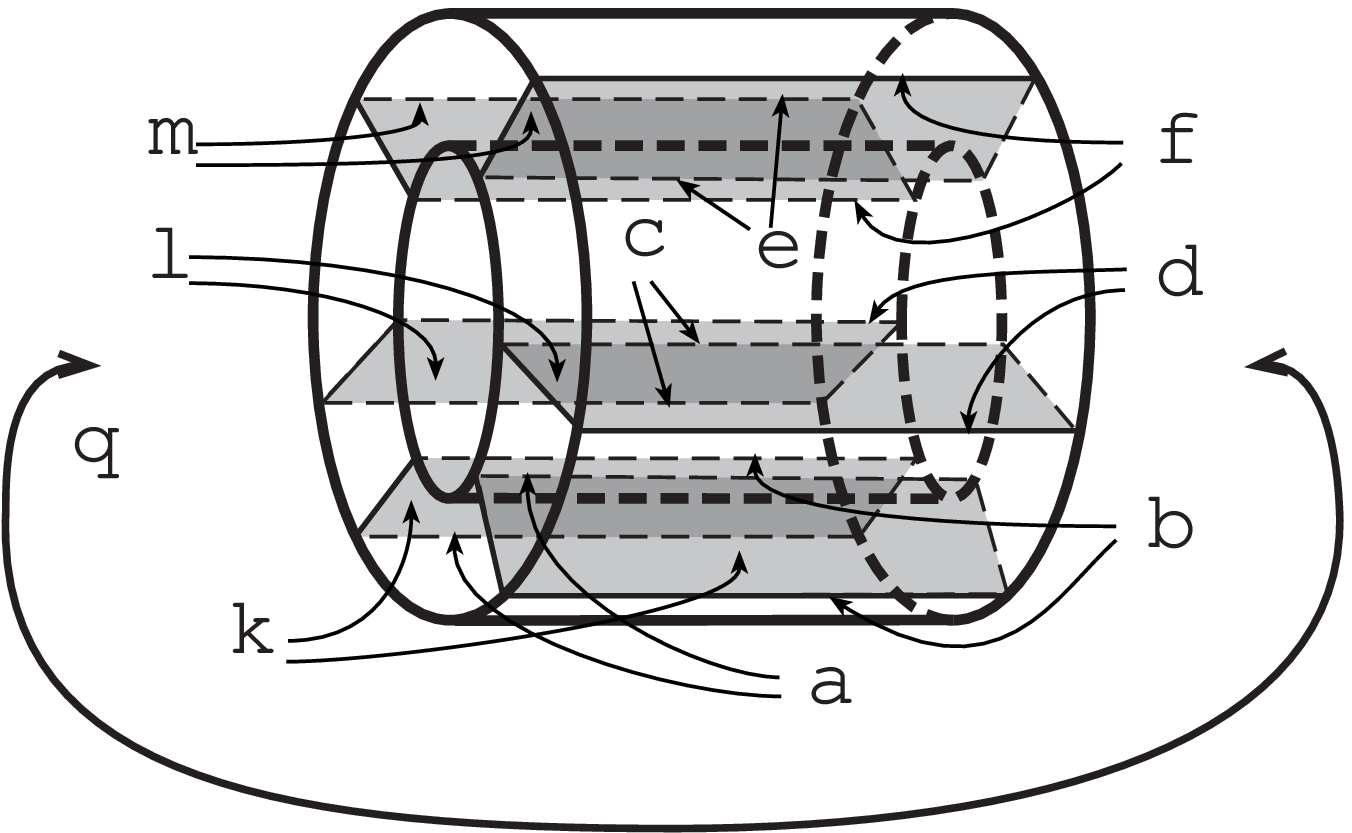}\\
\vskip 0.5 truecm \centerline{Figure 5: Cut $N_i$ through type (III)
surfaces}
\end{center}

If $n=1$, we can check that $\psi$ pastes $A_1$ and $B_1$ to a Klein
bottle, which contradicts the fact that $f(T)$ is torus. When $n>1$,
we can assume $\psi$ pastes $a_1$ to $b_1$ and pastes $a_2$ to
$b_2$, after reindexing $A_i$ if necessary. By the orders of
sequences of $a_1,\dots,a_n,a_1',\dots,a_n'$ and
$b_1,\dots,b_n,b_n',\dots,b_1'$ on $\partial N_1$ and $\partial
N_2$, we have $a_i$ is pasted to $b_i$, and $a_i'$ pasted to
$b_{n-i}'$, $i=1,\dots,n$. So $A_i$, $A_{n-i}$, $B_i$, $B_{n-i}$ are
pasted to one component of $f(T)$ in $N_\psi$, and $f(T)$ has
$[\frac{n+1}{2}]$ components. Since $f(T)$ is connected, we have
$n=2$.

Now $N_1\setminus f(T)$ can be presented as two I-bundles over
annulus: $I\times A_1$ and $I\times A_2$, where $f(T)\bigcap
N_1=A_1\cup A_2$, as in Figure 4. $N_2\setminus f(T)$ can be
presented as an I-bundle over annulus $I\times B$ as in Figure 6(a)
and two solid tori $P_1$ and $P_2$ with the core of
 $P_i\bigcap \partial N_2$ to be the $(2,1)$ curve of $\partial
 P_i$ as in Figure 6(b).

\begin{center}
\psfrag{p}[]{$\rho$} \psfrag{q}[]{$\rho$} \psfrag{a}[]{$(a)$}
\psfrag{b}[]{$(b)$}
\includegraphics[height=2in]{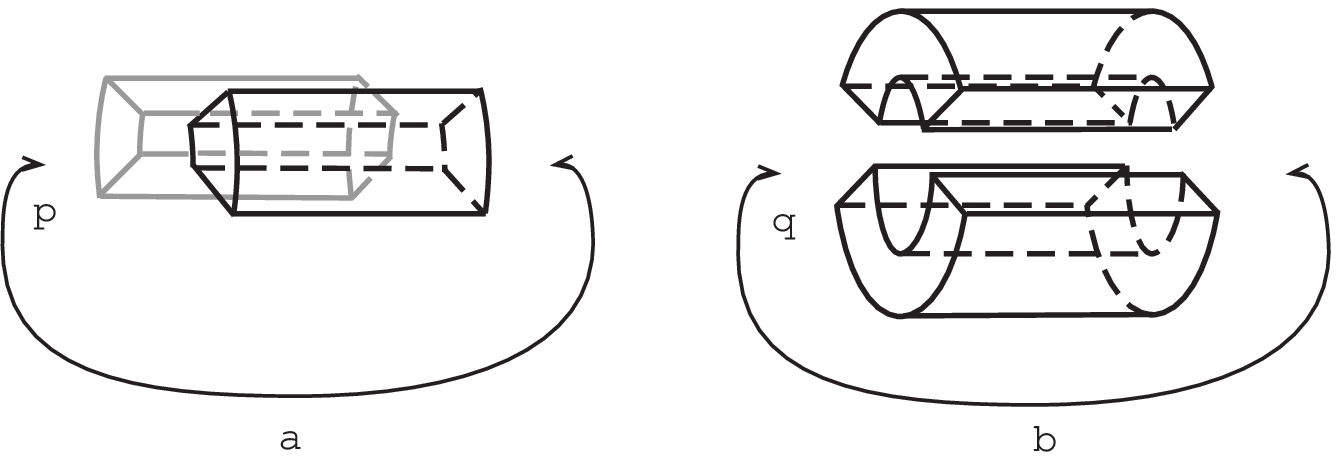}\\
\vskip 0.5 truecm \centerline{Figure 6}
\end{center}

If we glue those five pieces along $\partial N$, we get
 two components of $N_\psi\setminus f(T)$ which are $N_1'=P_1\cup_{\partial
N}I\times A_1\cup_{\partial N}P_2$ and $N_2'=I\times
A_2\cup_{\partial N} I\times B$ (re-index $A_i$ if needed), each of
them is a copy of $N$. Moreover under the inherited Seifert
structure of $N_\psi$, $N'_1=M(0,1;\frac{1}{2},-\frac{1}{2})$ and
$N'_2=M(-1,1;)$.

If we consider that $M(-1,0;1/2,-1/2,z)$ is obtained by identifying
$N_1'$ and $N_2'$ along $f(T)$, we get a new semi-bundle structure
so that $f(T)$ become a fiber torus. Since the Euler number of the
Seifert structure is $z$, the new gluing map must be $\left(
            \begin{array}{cc}
              z & 1 \\
              1 & 0 \\
            \end{array}
           \right)^{\pm 1}$. This reduces us to the situation that $f(T)$ is parallel to $T'$.

Case (2). Both $f(T)\bigcap N_i$  are type (II) surfaces, for
$i=1,2$ (Figure 4). Hence $f(T)\bigcap N_1$ is exactly as that in
Case (1) (i). Similarly, $f(T)\bigcap N_2$ are $n$ parallel annulus
$B_1,\dots,B_n$  located in a cyclic order in $N$. Set $\partial
B_i=b_i\bigcup b_i'$, then $2n$ circles
$b_1,\dots,b_n,b_1',\dots,b_n'$ are located in cyclic order in
$\partial N_2$.

We can assume $\psi$ paste $a_1$ to $b_1$ and paste $a_2$ to $b_2$
(re-index $\{B_i\}$ if needed). Then we have $a_i$ is pasted to
$b_i$, and $a_i'$ pasted to $b_i'$, $i=1,\dots,n$. So $A_i$ and
$B_i$ are pasted to one component of $f(T)$ in $N_\psi$. Since
$f(T)$ is connected, $n=1$. But here $f(T)$ does not separate
$N_\psi$, it is impossible.

Case (3). (We copy the proof of \cite{Ha} for this case.) Now
$\psi$=$\left(
            \begin{array}{cc}
              1 & z \\
              0 & 1 \\
            \end{array}
           \right)$, and $N_\psi=M(0,0;1/2,1/2,-1/2,-1/2,z)$, $e(N_\psi)=z$.  (Both $f(T)\bigcap N_i$ are type (III).)

We may assume that $f(T)$ has been isotoped to be either vertical or
horizontal in this Seifert fibering. Since a connected horizontal
essential surface is not separating, $f(T)$ must be vertical. Then
$f(T)$ must separate $M(0,0;1/2,1/2,-1/2,-1/2,z)$ into two copies of
$N$ both having the inherited  Seifert structure
$M(0,1;\frac{1}{2},-\frac{1}{2})$. We can rechoose the semi-bundle
structure so that $f(T)$ become a fiber torus. Then for the new
torus semi-bundle structure the gluing map must also be $\left(
            \begin{array}{cc}
              1 & z \\
              0 & 1 \\
            \end{array}
           \right)$. This reduces us to the situation that $f(T)$ is parallel to
$T'$.\end{proof}

\subsection {Incompressible surfaces.}

\begin{lem}[\cite{Ha}, Lemma 2.7]\label{lm3}For a torus bundle $M_\phi$, if $\phi$ is not conjugate to $\pm\left(
                                                                            \begin{array}{cc}
                                                                              1 & 0 \\
                                                                              n & 1 \\
                                                                            \end{array}
                                                                          \right)$,
 then any essential closed surface in $M_\phi$ is isotopic to a union of torus fibers.
\end{lem}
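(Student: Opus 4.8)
The plan is to combine the topological classification of closed incompressible surfaces in $M_\phi$ with a linear-algebra obstruction supplied by the monodromy. By Lemma \ref{basic}(3) any essential closed surface $F\subset M_\phi$ is already a union of mutually parallel incompressible tori, so the entire content to be proved is that such tori can be isotoped \emph{onto fibers}. Fix a torus fiber $T\subset M_\phi$ and recall that splitting $M_\phi$ along $T$ recovers $T\times I$, with the two boundary copies $T\times\{0\}$ and $T\times\{1\}$ reglued through $\phi$. First I would isotope $F$ into transverse, minimal position with respect to $T$, and remove any intersection circles that are trivial on $F$ or on $T$ by the usual innermost-disk argument (legitimate since both $F$ and $T$ are incompressible and $M_\phi$ is irreducible). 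After this reduction every circle of $F\cap T$ is essential on both surfaces; being disjoint essential circles on the torus $T$, they are all parallel, hence share a single slope $\gamma$ on $T$.

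Next I would separate into two cases according to whether $F\cap T$ is empty. If $F\cap T=\varnothing$, then $F$ lies in the interior of $T\times I$, and an incompressible torus in the irreducible manifold $T\times I$ is isotopic to a horizontal level $T\times\{\mathrm{pt}\}$; thus $F$ is parallel to the fiber, which is exactly the desired conclusion. If instead $F\cap T\neq\varnothing$, I would split $M_\phi$ along $T$ and examine the pieces of $F$ inside $T\times I$. Minimality of the intersection makes these pieces boundary-incompressible (as well as incompressible), with nonempty boundary of slope $\gamma$ on $T\times\{0,1\}$; the classification of incompressible, boundary-incompressible surfaces in $T\times I$ then forces each piece to be a vertical annulus $\gamma\times I$. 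For these annuli to close up into the embedded tori $F$ after regluing by $\phi$, the boundary slope on $T\times\{1\}$ must match that on $T\times\{0\}$, i.e. $\phi$ must carry the slope $\gamma$ to itself.

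Finally I would convert the constraint ``$\phi$ fixes the rational slope $\gamma$'' into the excluded algebraic form. Writing $\gamma$ as a primitive integer vector, $\phi\gamma=\lambda\gamma$ for some integer $\lambda$; since $\det\phi=1$ the two eigenvalues multiply to $1$, so an integral eigenvalue must be $\lambda=\pm1$, whence both eigenvalues equal $\pm1$ and $\mathrm{tr}\,\phi=\pm2$. Every element of $SL_2(\mathbb{Z})$ of trace $\pm2$ is conjugate to $\pm\left(\begin{array}{cc}1 & 0 \\ n & 1\end{array}\right)$, contradicting the hypothesis on $\phi$. Hence the case $F\cap T\neq\varnothing$ cannot occur, and $F$ is isotopic to a union of torus fibers. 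The main obstacle I anticipate is the purely $3$-manifold step inside $T\times I$: namely justifying that, after minimizing $|F\cap T|$, the split pieces are genuinely boundary-incompressible and therefore fall under the annulus/horizontal-torus classification in $T\times I$. Once that classification is in hand, the slope-matching and the eigenvalue computation are routine.
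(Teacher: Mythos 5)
Your proof is correct, and it coincides with the argument behind the paper's source: the paper gives no proof of this lemma at all, quoting it as Lemma 2.7 of Hatcher's notes \cite{Ha}, and Hatcher's proof is exactly your splitting argument --- put $F$ in minimal position with respect to a fiber $T$, classify the pieces of $F$ in the split-open manifold $T\times I$ as level tori or vertical annuli (the $\partial$-incompressibility point you flag is the standard minimality argument), and observe that the annulus case forces $\phi$ to fix a slope, hence to have trace $\pm 2$ and be conjugate to $\pm\left(\begin{array}{cc}1 & 0\\ n & 1\end{array}\right)$, contradicting the hypothesis.
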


\begin{lem}\label{lm5}
If a torus semi-bundle $N_\phi$ has no torus bundle structure, then
any essential closed surface in $N_\phi$ is isotopic to copies of
torus fibers of a torus semi-bundle structure on $N_\phi$, which is
isomorphic to $N_\phi$.
\end{lem}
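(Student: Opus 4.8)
The plan is to follow closely the proof of Theorem \ref{thm23}, now applied to an \emph{arbitrary} essential torus rather than to the image $f(T)$ of a fiber torus, and to use the ``no torus bundle structure'' hypothesis exactly where that proof used the fact that $f(T)$ separates. First I would reduce to a single torus. By Lemma \ref{basic}(3) any essential closed surface in $N_\phi$ is a union of parallel incompressible tori, so it suffices to show that one such torus $F$ is isotopic to a fiber of a semi-bundle structure isomorphic to $N_\phi$; parallel copies of $F$ then become parallel copies of that fiber. Let $T$ be the fiber torus of the given semi-bundle structure, so $N_\phi\setminus T=N_1\cup N_2$ with each $N_i$ a copy of $N$. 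Since $N_\phi$ is irreducible and both $F$ and $T$ are incompressible, a standard innermost-circle argument lets me isotope $F$ so that $F\cap T$ is minimal and every component of $F\cap N_i$ is essential in $N_i$; by Lemma \ref{basic}(2) each component is of type I, II or III.

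If $F$ is disjoint from $T$, or if some component of $F\cap N_i$ is of type I, then $F$ is a torus parallel to $\partial N_i$, hence isotopic to $T$, and is already a fiber; nothing more is needed. So assume every component of $F\cap N_i$ is an annulus of type II or III, with boundary slope $l_\infty$ or $l_0$ respectively. As these annuli are glued along $T$ by $\phi$, their boundary slopes must correspond under $\phi$, and exactly as in the proof of Theorem \ref{thm23} this leads to three cases: $\phi(l_\infty)=l_0$ (Case 1), $\phi(l_\infty)=l_\infty$ (Case 2), and $\phi(l_0)=l_0$ (Case 3).

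Here is the single place the hypothesis enters. Case 2 forces $\phi$ to preserve the slope $l_\infty$, and by Proposition \ref{coord-2} a gluing preserving $l_\infty$ presents $N_\phi$ as a torus bundle, contrary to assumption; hence Case 2 cannot occur. I would emphasize that in Theorem \ref{thm23} this case was ruled out because $f(T)$ had to separate $N_\psi$, whereas here $F$ may perfectly well be non-separating. Indeed the configurations of Case 2 are exactly the (non-separating) fibers of a torus bundle structure, so the ``no torus bundle'' hypothesis is genuinely indispensable and replaces the separation argument.

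It remains to treat Cases 1 and 3, which are identical to Case (1) and Case (3) of the proof of Theorem \ref{thm23} with $F$ in the role of $f(T)$. In each, that proof produces a new semi-bundle decomposition of $N_\phi$ --- by regluing the annular pieces $I\times A_i$, $I\times B$ and the solid tori $P_1,P_2$ along $\partial N$ in Case 1, and by putting $F$ in vertical position in the Seifert fibering $M(0,0;1/2,1/2,-1/2,-1/2,z)$ in Case 3 --- in which the two sides are again copies of $N$ and $F$ is precisely a fiber torus. Thus $F$, and with it the original surface, is isotopic to fibers of a semi-bundle structure on $N_\phi$ isomorphic to $N_\phi$. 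I expect the main work to be the combinatorial bookkeeping imported from Theorem \ref{thm23} --- the counting of the annular components that forces the number of pieces and assembles them into exactly two copies of $N$ with the claimed inherited Seifert invariants; by contrast, identifying Case 2 with the torus-bundle case through Proposition \ref{coord-2} is the conceptual crux but is short.
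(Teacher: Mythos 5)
Your overall structure matches the paper's: reduce to a single torus $F$ via Lemma \ref{basic}(3), isotope so that each component of $F\cap N_i$ is essential of type I, II or III, dispose of type I immediately, send the cases involving a type III annulus back to Cases (1) and (3) of the proof of Theorem \ref{thm23}, and use the no-torus-bundle hypothesis exactly once, to kill the case where both $F\cap N_1$ and $F\cap N_2$ consist of type II annuli. Your remark that this is precisely where the hypothesis replaces the separation argument of Theorem \ref{thm23} is also the paper's point: this case genuinely occurs for non-separating $F$, and it is case (a) of the paper's proof.

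The gap is in how you kill that case. You rule it out by citing Proposition \ref{coord-2}: ``a gluing preserving $l_\infty$ presents $N_\phi$ as a torus bundle.'' But in this paper the ``moreover'' clause of Proposition \ref{coord-2} --- the equivalence between $N_\phi$ having a torus bundle structure and $\phi=\left(\begin{array}{cc}1&0\\z&1\end{array}\right)$ in suitable coordinates --- is itself proved in Section 2.4 by appealing to the proof of Lemma \ref{lm5}, specifically to the analysis of this both-type-II case. So your citation is circular within the paper's logical order: the statement you invoke lies downstream of the lemma you are proving. The paper instead disposes of this case by a direct geometric argument which your closing sentence only gestures at: if both $F\cap N_i$ are type II annuli, then each $N_i\setminus F$ is an I-bundle over $F\cap N_i$; gluing these I-bundles along $\partial N$ yields an I-bundle over $F$, and $N_\phi$ is recovered from it by identifying top and bottom, which exhibits $N_\phi$ as a torus bundle with fiber $F$, contradicting the hypothesis. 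Replacing your appeal to Proposition \ref{coord-2} by this short I-bundle argument (no coordinates needed) removes the circularity and makes your proof agree with the paper's.
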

\begin{proof}  Let $F$ be an essential close surface in $N_\phi=N_1\bigcup N_2$.
By Lemma \ref{basic} (3), $F$ is a union of parllel tori. For our
purpose we may assume that $F$ is a torus. Isotope $F$ so that
$F\bigcap N_i$ is essential in $N_i$. Then each component of
$F\bigcap N_i$ must be in one of the three types listed in Lemma
\ref{basic}.

If  $F\bigcap N_i$ is of type (I), then the proof is finished.

 There are two cases remaining:

(a) Both $F\bigcap N_i$ are of type (II) for $i=1,2$ (Figure 4).
Then $N_i\setminus F$  are I-bundles over $N_i\cap F$. Gluing those
two $I$-bundles along $\partial N$ will get an I-bundle over $F$ and
$N_\phi$ is obtained from this I-bundle by identifying its top and
bottom, which provides a torus bundle structure of $N_\phi$.

(b) Some $F\bigcap N_i$ is of type (III), say $i=2$ (Figure 5). Then
$F$ is the same as $f(T)$  either in Case (1) or Case (3) of the
proof of Theorem \ref{thm23}, depends on $F\bigcap N_1$ is of type
(III) or type (II).

As indicated in the proof of Theorem \ref{thm23}, we can rechoose
the new torus semi-bundle structure $N_\psi$ so that $F$ become a
fiber torus; moreover if choosing suitable coordinates, we can make
$\psi$ to be $\phi$.
\end{proof}

\subsection{Coordinates of torus semi-bundles.}

Call a map $g:(M,\partial M)\rightarrow (M',\partial M')$ is {\it
proper} if $g^{-1}(\partial M') \subset \partial M$.

\begin{lem} If $V=T\times I$ with the two boundaries $T^+,T^-$ and
$g: (V,T^+,T^-) \rightarrow (N,\partial N)$ is a proper map, then
$(g|_{T^+})_*=\tau_*\cdot (g|_{T^-})_*$, where
$\tau_*=\left(\begin{array}{cc}1 & 0 \\0 & -1 \\
\end{array}\right)$.
\end{lem}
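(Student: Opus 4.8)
The plan is to run everything through the double cover $p:\tilde N=S^1\times S^1\times I\to N$ and the deck involution $\tau(x,y,z)=(x+\pi,-y,1-z)$. First I would record how $\tau$ behaves on the boundary. Since $\tau$ interchanges the two ends $T^{(0)}=S^1\times S^1\times\{0\}$ and $T^{(1)}=S^1\times S^1\times\{1\}$ of $\tilde N$, and $p$ carries each of them homeomorphically onto $\partial N$, the two identifications $(p|_{T^{(0)}})_*$ and $(p|_{T^{(1)}})_*:\mathbb Z^2\to\pi_1(\partial N)$ are the objects to compare. On $\partial\tilde N$ the involution acts by $(x,y)\mapsto(x+\pi,-y)$, hence in the basis $(C_x,C_y)$ it induces exactly $\tau_*=\mathrm{diag}(1,-1)$; as the canonical coordinate is $(l_0,l_\infty)=(p(C_x),p(C_y))$, this says precisely that $(p|_{T^{(0)}})_*$ and $(p|_{T^{(1)}})_*$ differ by $\tau_*$, i.e. $[p|_{T^{(1)}}]=I$ and $[p|_{T^{(0)}}]=\tau_*$ in canonical coordinates.

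Next I would lift $g$. Because $T^+\hookrightarrow V$ is a homotopy equivalence, $g_*(\pi_1V)=(g|_{T^+})_*(\pi_1T^+)$ lies in the image of $\pi_1(\partial N)\to\pi_1(N)$, which is the index-two orientation subgroup $\pi_1(\tilde N)$ of $\pi_1(N)=\pi_1(K)$; hence $g$ lifts to a proper map $\tilde g:(V,\partial V)\to(\tilde N,\partial\tilde N)$ with $\tilde g^{-1}(\partial\tilde N)=\partial V$. Each connected $T^\pm$ then maps into a single end $T^{(i)},T^{(j)}$ of $\tilde N$, and using the product identifications $\pi_1T^+\cong\pi_1V\cong\pi_1T^-$ and $\pi_1T^{(i)}\cong\pi_1\tilde N\cong\pi_1T^{(j)}$ one gets $(\tilde g|_{T^+})_*=(\tilde g|_{T^-})_*=:M$ in the standard bases, so that $(g|_{T^+})_*=[p|_{T^{(i)}}]\,M$ and $(g|_{T^-})_*=[p|_{T^{(j)}}]\,M$. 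Feeding in $[p|_{T^{(1)}}]=I$, $[p|_{T^{(0)}}]=\tau_*$ and $\tau_*^{2}=I$, one checks that whenever $\{i,j\}=\{0,1\}$ the identity $(g|_{T^+})_*=\tau_*(g|_{T^-})_*$ holds for every $M$, invertible or not (and symmetrically in $T^\pm$, since $\tau_*=\tau_*^{-1}$). Thus the whole lemma reduces to the single geometric assertion that $i\ne j$, i.e. that $\tilde g$ sends the two ends of $V$ to the two distinct ends of $\tilde N$.

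This separation of ends is the heart of the matter, and I expect it to be the main obstacle. I would establish it from properness together with the relative fundamental class: applying the boundary map to $\tilde g_*[V,\partial V]=d\,[\tilde N,\partial\tilde N]$ yields, on one side, $d([T^{(1)}]-[T^{(0)}])\in H_2(\partial\tilde N)=\mathbb Z\oplus\mathbb Z$, and on the other side $\deg(\tilde g|_{T^-})[T^{(j)}]-\deg(\tilde g|_{T^+})[T^{(i)}]$; if $i=j$ the latter is supported on a single summand, which is incompatible with the former once the relative degree $d$ is nonzero. So the two ends are forced apart exactly in the nonzero-degree setting, which is where the lemma is meant to be used. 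Equivalently, one can phrase the argument purely in $\pi_1$ by choosing a path $\ell$ in $V$ from $T^+$ to $T^-$: then $(g|_{T^+})_*=c_{g_*[\ell]}\circ(g|_{T^-})_*$ with $c$ denoting conjugation, and a direct check shows that conjugation by an orientation-reversing element of $\pi_1(K)$ acts on the orientation subgroup $\pi_1(\partial N)$ by $\tau_*$, while conjugation by an orientation-preserving element acts trivially; the content is again that $g_*[\ell]$ is orientation-reversing, equivalently that $\ell$ lifts to a path joining $T^{(0)}$ to $T^{(1)}$. I would present the homological degree computation as the clean way to pin down this orientation-reversing behaviour and thereby close the argument.
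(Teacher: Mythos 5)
Your proposal is correct, and its skeleton is the same as the paper's: lift $g$ through the double cover $p\colon T\times I\to N$, normalize coordinates so that $p$ restricted to one end induces the identity, use the product structure of $V$ to identify $(\tilde g|_{T^+})_*$ with $(\tilde g|_{T^-})_*$, and read off $\tau_*$ as the contribution of the deck transformation. What you do differently is to isolate, and actually prove, the step the paper takes for granted: the paper's proof simply writes the lift as a map of triples $\tilde g\colon (V,T^+,T^-)\to (T\times I,\,T\times\{0\},\,T\times\{1\})$, i.e.\ it assumes without justification that the two ends of $V$ go to the two \emph{distinct} ends of $T\times I$. Your boundary-of-the-fundamental-class argument establishes exactly this whenever the relative degree is nonzero, and your caveat that some such hypothesis is genuinely needed is right: for degree-zero proper maps the statement is false. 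For instance, map $V=T\times I$ into a collar $\partial N\times[0,1)\subset N$ by $(x,t)\mapsto (h(x),\,t(1-t))$ with $h$ a homeomorphism; this is proper in the paper's sense, both boundary restrictions equal $h$, and $h_*\neq \tau_* h_*$ since $\det\tau_*=-1$. So the lemma should be read as a statement about proper maps whose lift separates the ends --- equivalently, maps of nonzero degree --- which is how it is used in the paper (the restrictions of the double covering in the lemma that follows it, and the pieces $g_i$ of nonzero-degree self-maps in Section 4). In short, your route is the paper's route plus a correct proof of the one assertion the paper leaves unproved; the extra homological work buys a complete argument and makes visible a hypothesis missing from the statement itself.
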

\begin{proof} Let $p:T\times I\to N$ be the double covering  and $\tau$ be the deck
transformation map.

Since $g_{*}(\pi_1(V))=(g|_{T^+})_*(\pi_1(T^+))\subset
\pi_1(\partial N)\subset \pi_1(N)$, thus $g$ can be lifted to a map
$\widetilde{g}: V\rightarrow T\times I$.
$$
 \xymatrix{
                &         (T\times I,T\times \{0\},T\times \{1\}) \ar[d]^{p} \ar[r]^{\tau} & (T\times I,T\times \{1\},T\times \{0\}) \ar[dl]^{p}    \\
 (V,T^+,T^-) \ar[ur]^{\widetilde{g}} \ar[r]_{g} & (N,\partial N)             }
$$

From the commuted diagram above, we have:
$$
\left\{ \begin{array}{l}
         g|_{T^-}=p|_{T\times \{1\}}\circ \widetilde{g}|_{T^-},\\
         g|_{T^+}=p|_{T\times \{1\}}\circ \tau|_{T\times \{0\}} \circ
         \widetilde{g}|_{T^+}.
         \end{array} \right.
$$
We can choose coordinate on $(T\times I,T\times \{0\},T\times
\{1\})$, such that $p|_{T\times \{1\}}=id$.

When considering fundamental group, we have
$(\widetilde{g}|_{T^-})_*=(\widetilde{g}|_{T^+})_*$. Thus by the
above equation:
$$
(g|_{T^+})_*=\tau_*\cdot (g|_{T^-})_*
$$
where $\tau_*=(\tau|_{T\times \{0\}})_*=\left(
                                        \begin{array}{cc}
                                          1 & 0 \\
                                          0 & -1 \\
                                        \end{array}
                                      \right)
$.
\end{proof}

\begin{lem}\label{lm2}
A torus semi-bundle $N_\phi$ is doubly covered by a torus bundle
$M_{\tau\phi\tau\phi^{-1}}$, where $\tau (x,y)=(x+\pi,-y)$ with
suitable choice of coordinate $(x,y)$ on the torus.
\end{lem}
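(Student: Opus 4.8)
The plan is to build the desired torus bundle cover by lifting the semi-bundle decomposition piece by piece and then reading off its monodromy. Write $N_\phi=N_1\cup_\phi N_2$, and let $p_i:\widetilde N_i=T\times I\to N_i$ be the connected orientation double cover of $N_i\cong N=T\times I/\tau$, with deck involution $\tau$. I would first record the behaviour of this cover over the gluing torus: since $\pi_1(\partial N)$ is exactly the kernel of the $\mathbb{Z}/2$–homomorphism $\pi_1(N_i)\to\mathbb{Z}/2$ defining $p_i$, the cover is trivial over the boundary, so $p_i^{-1}(\partial N_i)$ consists of two tori $T_i^+,T_i^-$, each mapped homeomorphically onto $\partial N_i$.

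Next I would assemble the global cover. The two homomorphisms $\pi_1(N_i)\to\mathbb{Z}/2$ both vanish on the amalgamating subgroup $\pi_1(\partial N)$, so by van Kampen they extend to a single homomorphism $\pi_1(N_\phi)\to\mathbb{Z}/2$; the associated index–$2$ cover is $M=\widetilde N_1\cup\widetilde N_2$, obtained by gluing $\partial\widetilde N_1$ to $\partial\widetilde N_2$. As two copies of $T\times I$ glued along both boundary tori, $M$ is a torus bundle over $S^1$. At this stage I would verify connectedness and fix the gluing pattern $T_1^+\leftrightarrow T_2^+,\ T_1^-\leftrightarrow T_2^-$ by checking it is compatible with a free deck involution restricting to $\tau$ on each $\widetilde N_i$.

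To identify the monodromy I would compute the gluing maps on $H_1$. Choosing coordinates so that each $p_i|_{T_i^+}$ is the identity, the preceding (proper–map) lemma forces $p_i|_{T_i^-}$ to induce $\tau_*=\left(\begin{smallmatrix}1&0\\0&-1\end{smallmatrix}\right)$. Lifting $\phi$ over each boundary torus then yields $g^+_*=\phi_*$ on $T_1^+\to T_2^+$ and $g^-_*=\tau_*^{-1}\phi_*\tau_*=\tau_*\phi_*\tau_*$ on $T_1^-\to T_2^-$. Traversing the fibre circle through $\widetilde N_1$ (transport $=\mathrm{id}$), across $g^-$, through $\widetilde N_2$ (transport $=\mathrm{id}$), and back across $(g^+)^{-1}$ gives the monodromy $\Psi_*=(g^+_*)^{-1}g^-_*=\phi_*^{-1}\tau_*\phi_*\tau_*$. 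Since $\phi_*\Psi_*\phi_*^{-1}=\tau_*\phi_*\tau_*\phi_*^{-1}$, the monodromy is conjugate to $\tau\phi\tau\phi^{-1}$, and Theorem \ref{thm21} (conjugate monodromies give diffeomorphic bundles) identifies $M$ with $M_{\tau\phi\tau\phi^{-1}}$; as a sanity check $\det(\tau\phi\tau\phi^{-1})=1$, consistent with an orientable torus bundle.

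I expect the \emph{main obstacle} to be the bookkeeping of the coordinate identifications rather than any deep difficulty: in particular the $\tau_*$–twist relating the two boundary tori of each cover, the correct lift of $\phi$ over each of them, and the verification that the two $T\times I$ pieces glue into a \emph{connected} torus bundle whose monodromy matches $\tau\phi\tau\phi^{-1}$ only up to the conjugacy-and-inverse ambiguity allowed by Theorem \ref{thm21}. Getting the direction conventions in the monodromy composition right is the place where a sign or an inverse could slip in, but the conjugacy freedom absorbs exactly such ambiguities.
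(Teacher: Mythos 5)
Your proposal is correct and takes essentially the same route as the paper's proof: restrict the double cover to each $N_i$ to get two copies of $T\times I$, choose coordinates so the covering induces the identity on one boundary torus of each piece, apply the preceding proper-map lemma to get the $\tau_*$-twist on the other boundary torus, deduce that the two gluing maps of the cover are $\phi$ and $\tau\phi\tau$, and read off the monodromy. The only differences are cosmetic: you explicitly construct the $\mathbb{Z}/2$-cover via van Kampen and check connectedness (the paper simply asserts the existence of the torus-bundle double cover), and you land on the conjugate monodromy $\phi^{-1}\tau\phi\tau$ and invoke Theorem \ref{thm21} to identify it with $M_{\tau\phi\tau\phi^{-1}}$, whereas the paper's bookkeeping yields $\psi'\psi^{-1}=\tau\phi\tau\phi^{-1}$ directly.
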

\begin{proof}
Let $N_{\phi}=N_1\bigcup_{\phi}N_2$ with $\partial N_1 =\partial
N_2=T$. Let $p:M\rightarrow N_{\phi}$ be the double cover, where $M$
is a torus bundle, $p^{-1}(N_i)=M_i$ is homeomorphic to $T\times I$,
$p^{-1}(T)=T_1 \bigcup T_2$. Cut $M$ along $T_1,T_2$, get $M
\setminus T_1\bigcup T_2$. The two boundaries of $M_i$ are denoted
by $T_i$ and $T_i'$, $T_1$ is pasted to $T_2$ by $\psi$, $T_1'$ is
pasted to $T_2'$ by $\psi'$. Let $p_i=p|_{M_i}$. All of these are
shown in figure 7.

\begin{center}
\psfrag{a}[]{$T_1^{'}$} \psfrag{b}[]{$T_2^{'}$} \psfrag{c}[]{$T_1$}
\psfrag{d}[]{$T_2$} \psfrag{e}[]{$\psi^{'}$} \psfrag{f}[]{$\psi$}
\psfrag{g}[]{$\phi$} \psfrag{i}[]{$p_1$} \psfrag{j}[]{$p_2$}
\psfrag{k}[]{$M_1$} \psfrag{l}[]{$M_2$} \psfrag{m}[]{$N_1$}
\psfrag{n}[]{$N_2$}
\includegraphics[height=3in]{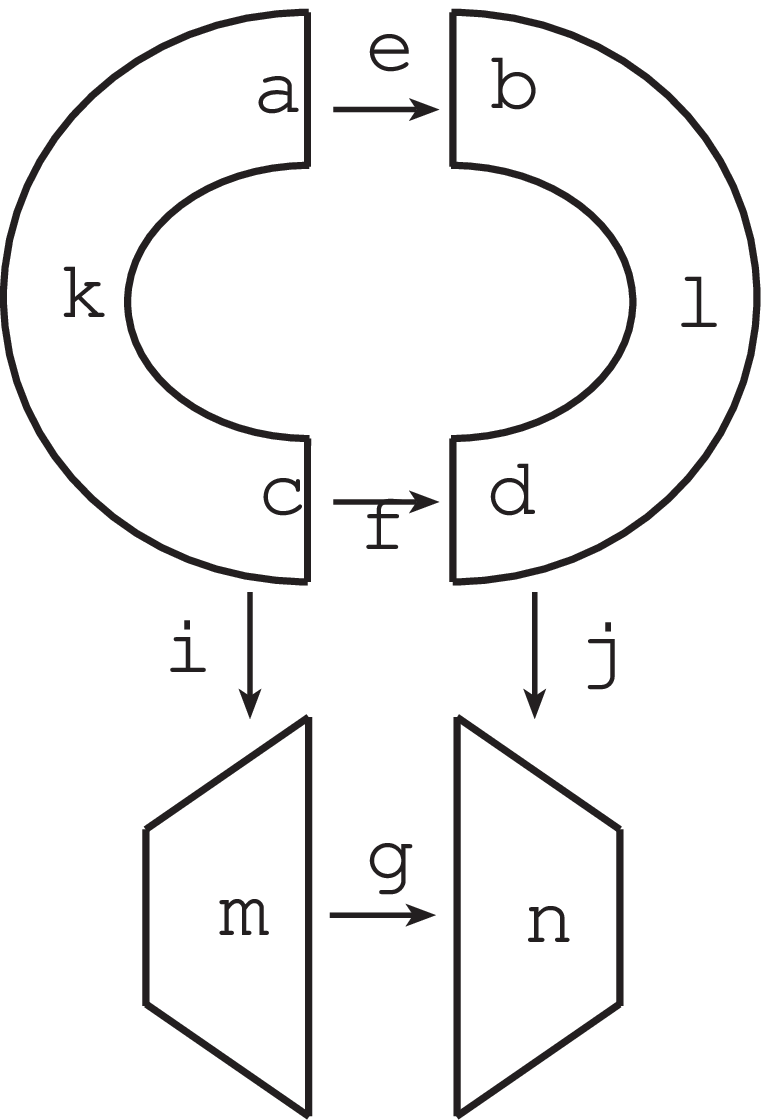}\\
\vskip 0.5 truecm \centerline{Figure 7: $N_\phi$ is double covered
by $M_{\tau\phi\tau\phi^{-1}}$}
\end{center}

We can choose coordinate on $T_1,T_2$, such that
$(p_i|_{T_i})_*=id$. Since $T_i'$ is parallel to $T_i$, we can
identify $\pi_1(T_i')$ with $\pi_1(T_i)$. By lemma 2.7, we have
$(p_i|_{T_i'})_*=\tau_*\cdot (p_i|_{T_i})_*$.

From  Figure 7, we know that

$$
\left\{ \begin{array}{l}
        (p_2|_{T_2})_*\circ \psi = \phi \circ (p_1|_{T_1})_*,\\
        (p_2|_{T_2'})_*\circ \psi' = \phi \circ (p_1|_{T_1'})_*.
\end{array} \right.
$$

Then we get

$$
\left\{ \begin{array}{l}
        \psi = \phi, \\
        \psi'=\tau \circ \phi \circ \tau.
\end{array} \right.
$$

Thus $M$ has the torus bundle structure
$M_{\psi'\psi^{-1}}=M_{\tau\phi\tau\phi^{-1}}.$
\end{proof}
By Theorem \ref{thm23}, and the fact that $\left(
            \begin{array}{cc}
              0 & 1 \\
              1 & z \\
            \end{array}
          \right)^{-1}$=$\left(
            \begin{array}{cc}
              -z & 1 \\
              1 & 0 \\
            \end{array}
          \right)$, with suitable choice of canonical coordinates of
$\partial N$, we can set $\phi$ is one of the four matrices below:

$\left(
            \begin{array}{cc}
              0 & 1 \\
              1 & z \\
            \end{array}
          \right)$,
          $\left(
            \begin{array}{cc}
              1 & z \\
              0 & 1 \\
            \end{array}
          \right)$, $\left(
            \begin{array}{cc}
              1 & 0 \\
              z & 1 \\
            \end{array}
          \right)$ and
          $\left(
                           \begin{array}{cc}
                             a & b \\
                             c & d \\
                           \end{array}
                         \right)$ where $abcd\neq 0, ad-bc=1$.

When $\phi$ is in the first three matrices, $N_\phi$ is a Seifert
manifold with Euler number $z$. $N_\phi$ is $E^3$ manifold if $z=0$
and is Nil manifold if $z\ne 0$. Now suppose $\phi$=$\left(
                           \begin{array}{cc}
                             a & b \\
                             c & d \\
                           \end{array}
                         \right)$ where $abcd\neq 0, ad-bc=1$. Then by Lemma \ref{lm2}, $N_\phi$ is double covered by
$M_{\tau\phi\tau\phi^{-1}}$. Since

$$(\tau \phi \tau \phi^{-1})_*=\tau_* \cdot \phi_* \cdot \tau_* \cdot
\phi_*^{-1}=\left(
              \begin{array}{cc}
                ad+bc & -2ab \\
                -2cd & ad+bc \\
              \end{array}
            \right),$$
we have

$$|Trace((\tau \phi \tau
\phi^{-1})_*)|=2|ad+bc|=2|ad-bc+2bc|=2|2bc+1|>2.$$

            By Proposition \ref{coord-1}, $M_{\tau\phi\tau\phi^{-1}}$ admits Sol
            geometry,
thus $N_\phi$ admits Sol geometry. The first part of Proposition
\ref{coord-2} is proved.

If $N_\phi$ also has torus bundle structure, it must have
non-separating essential torus. Recall the proof of Lemma \ref{lm5},
an essential torus in $N_\phi$ can be non-separating only if case
(a) is happened, and in this case $\phi=\left(
\begin{array}{cc}
1 & 0 \\
z & 1 \\
\end{array}
\right)$ under suitable choice of canonical coordinates, and
$N_\phi$ does have torus bundle structure. This finishes the
"moreover" part of Proposition \ref{coord-2}.

\subsection{Lifting automorphism from semi-bundle to bundle.}

\begin{thm}\label{thm25}
Suppose  $f: N_\phi \to N_\psi$ is a non-zero degree map and
$f^{-1}(T')$ is a union of copies of $T$, where $T,T'$ are the torus
fiber of $N_\phi,N_\psi$ respectively. Then we have commute diagram
\[ \begin{CD}
M@> \tilde{f} >> M'\\
@V p VV @VV p' V\\
N_\phi @> f >> N_\psi
\end{CD}  \]
where $M,M'$ are the torus bundle which are double covers of
$N_\phi,N_\psi$ respectively and $\tilde{f}:M \to M'$ is a lift of
$f$:
\end{thm}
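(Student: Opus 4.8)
The plan is to apply the standard lifting criterion for covering spaces to the composite $f\circ p\colon M\to N_\psi$. Since $M$ is connected and locally path-connected and $p'\colon M'\to N_\psi$ is a double cover, a lift $\tilde f\colon M\to M'$ with $p'\circ\tilde f=f\circ p$ exists if and only if $(f\circ p)_*\pi_1(M)\subseteq p'_*\pi_1(M')$; once this inclusion is verified, the commutativity $p'\circ\tilde f=f\circ p$ holds by the defining property of the lift, giving the diagram. Everything thus reduces to comparing two index-two subgroups of $\pi_1(N_\psi)$.

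To do this I would first identify these subgroups intrinsically. Every torus semi-bundle carries the foliation by tori parallel to $\partial N$ degenerating to the two Klein-bottle cores, whose base $1$-orbifold is the interval with two reflector endpoints; this yields a short exact sequence $1\to\pi_1(T)\to\pi_1(N_\phi)\to D_\infty\to 1$, where $D_\infty=\langle s,t\mid s^2=t^2=1\rangle$ is the infinite dihedral group and $\pi_1(T)\cong\mathbb{Z}^2$ is the fiber subgroup carried by a regular torus fiber. Composing the quotient with the sign character $\epsilon\colon D_\infty\to\mathbb{Z}/2$, $s,t\mapsto 1$, whose kernel is the infinite-cyclic translation subgroup $\langle st\rangle$, produces $\omega\colon\pi_1(N_\phi)\to\mathbb{Z}/2$. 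Because the torus-bundle double cover $M$ of Lemma \ref{lm2} is precisely the cover corresponding to the preimage of $\langle st\rangle$, one has $p_*\pi_1(M)=\ker\omega$; defining $\omega'$ on $\pi_1(N_\psi)$ the same way gives $p'_*\pi_1(M')=\ker\omega'$. The required inclusion $f_*(\ker\omega)\subseteq\ker\omega'$ is then equivalent to the character identity $\omega'\circ f_*=\omega$.

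To establish $\omega'\circ f_*=\omega$ I would invoke both hypotheses. Since $\deg f\neq 0$ the map $f$ is surjective, so $f^{-1}(T')\neq\varnothing$; by assumption each of its components is a fiber torus $T$, whence $f_*(\pi_1(T))\subseteq\pi_1(T')$. Consequently $f_*$ descends to a homomorphism $\bar f_*\colon D_\infty\to D_\infty'$ of base-orbifold groups, and it suffices to check $\epsilon'\circ\bar f_*=\epsilon$, i.e.\ that the reflection generators $s,t$ are sent to genuine reflections (order-two elements outside the translation subgroup) rather than into $\langle s't'\rangle$. This is the step where nonzero degree is essential: if $\bar f_*$ sent a reflection into the translation subgroup, it would collapse a reflector endpoint of the base interval, so $\bar f$ would miss a neighbourhood of a core and $f$ would fail to be surjective there, forcing $\deg f=0$, a contradiction.

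I expect this last point—upgrading the fiber-preimage condition to the statement that $\bar f_*$ preserves the sign character—to be the main obstacle, as it is the only place where geometry rather than the pure algebra of covers enters. The cleanest route is to argue directly on the induced base map $[0,1]\to[0,1]$, obtained after homotoping $f$ to be fiber-preserving away from $f^{-1}(T')$, where a local-degree argument forces reflector endpoints to map to reflector endpoints. With $\omega'\circ f_*=\omega$ in hand, $f_*(\ker\omega)\subseteq\ker\omega'$, the lifting criterion produces $\tilde f$, and the commutative square follows, completing the proof.
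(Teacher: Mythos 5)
Your framework coincides with the paper's: reduce to the covering-space lifting criterion $f_*(p_*\pi_1(M))\subseteq p'_*\pi_1(M')$, use the exact sequence $1\to\pi_1(T)\to\pi_1(N_\phi)\to\mathbb{Z}_2*\mathbb{Z}_2\to 1$ together with $f(T)=T'$ to get an induced map $\bar f_*$ of infinite dihedral base groups, and reduce everything to showing that $\bar f_*$ carries the index-two translation subgroup into the translation subgroup. Your packaging via the sign character $\omega=\epsilon\circ j_1$, with $p_*\pi_1(M)=\ker\omega$ (which is indeed exactly the double cover of Lemma \ref{lm2}), is in fact a tidier route to the final inclusion than the paper's explicit diagram chase, and that part is sound (modulo the harmless imprecision that the inclusion is equivalent to $\omega'\circ f_*\in\{0,\omega\}$ rather than to $\omega'\circ f_*=\omega$; you prove the stronger identity, which suffices).

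The genuine gap is in your justification of the crucial identity $\epsilon'\circ\bar f_*=\epsilon$, the very step you flag as the main obstacle. Your argument --- that if $\bar f_*$ sent a reflection into $\langle s't'\rangle$ then $\bar f$ would ``miss a neighbourhood of a core'' and $f$ could not be surjective --- is not valid as stated: the hypothesis only makes $f^{-1}(T')$ a union of fiber tori, so there is no induced base map $[0,1]\to[0,1]$ to run a local-degree argument on; homotoping $f$ to be globally fiber-preserving (your proposed fix) is a substantial Waldhausen-type statement that you neither prove nor actually need, and in any case a $\pi_1$-level hypothesis does not directly yield non-surjectivity of a map. The step is really pure algebra, which is how the paper handles it: since $s^2=1$, the element $\bar f_*(s)$ has order dividing $2$, and the only such elements of $D_\infty$ are the identity and the reflections (the odd alternating words $s't'\cdots s'$ and $t's'\cdots t'$); since $\langle s't'\rangle$ is torsion-free, $\bar f_*(s)\in\langle s't'\rangle$ could only mean $\bar f_*(s)=1$. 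Triviality is then excluded by non-zero degree via covering spaces rather than surjectivity: if $\bar f_*(s)=1$ (or $\bar f_*(t)=1$), the image of $\bar f_*$ has order at most $2$, so $f_*\pi_1(N_\phi)$ lies in an infinite-index subgroup of $\pi_1(N_\psi)$; lifting $f$ to the non-compact cover of $N_\psi$ corresponding to $f_*\pi_1(N_\phi)$, whose third homology vanishes, forces $\deg f=0$, a contradiction. With both $\bar f_*(s)$ and $\bar f_*(t)$ genuine reflections, their product is a translation and your character identity follows. (For what it is worth, the paper asserts the reflection claim without addressing the possible trivial images either, so this covering-space argument is also the cleanest way to complete its own proof.)
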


\begin{proof} We only have to check $f_*(p_*(\pi_1(M))) \subset
p'_*(\pi_1(M'))$.

Let $\tilde{T},\tilde{T'}$ be one of the lifting of $T,T'$ in $M,M'$
respectively. In torus bundle $M$, we have the exact sequence:$$1
\to \pi_1(\tilde{T}) \to \pi_1(M) \to \pi_1(S^1) \to 1.$$ In torus
semi- bundle $N_\phi$, we have another exact sequence: $$1 \to
\pi_1(T) \to \pi_1(N_\phi) \to \mathbb{Z}_2 *\mathbb{Z}_2 \to 1.$$

Since $f^{-1}(T')$ is a union of copies of $T$, we can assume
$f(T)=T'$. Then we have the commuted diagram (every row is exact):
\[ \begin{CD}
1 @> >> \pi_1(\tilde{T}) @> \tilde{i_1} >> \pi_1(M) @> \tilde{j_1}
>>\pi_1(S^1)@> >>1\\
@. @V (p|)_*VV  @V p_*VV @V \bar{p}_*VV @.\\
1 @> >> \pi_1(T) @> i_1 >> \pi_1(N_\phi) @> j_1
>>\mathbb{Z}_2 *\mathbb{Z}_2@> >>1\\
@. @V (f|)_*VV  @V f_*VV @V \bar{f}_*VV @.\\
1 @> >> \pi_1(T') @> i_2 >> \pi_1(N_\psi) @> j_2
>>\mathbb{Z}_2 *\mathbb{Z}_2@> >>1\\
@.  @A (p'|)_*AA  @A p'_*AA @A \bar{p'}_*AA @.\\
1 @> >> \pi_1(\tilde{T'}) @> \tilde{i_2} >> \pi_1(M') @> \tilde{j_2}
>>\pi_1(S^1)@> >>1,\\
\end{CD}  \]
here $\bar{p}_*,\bar{p'}_*,\bar{f}_*$ are the maps among the
fundamental groups of the base spaces of fiber bundles induced by
the maps among the fundamental groups of the total spaces.

We present the group $\mathbb{Z}_2 *\mathbb{Z}_2$ by $<a,b\ |\
a^2=b^2=1>$ and choose the generator $a,b$ such that
$\bar{p}_*(1)=ab,\bar{p'}_*(1)=ab$ (here $1$ is the generator of
$\pi_1(S^1)$).

 Since $a^2=b^2=1$, so
$\bar{f}_*(a)^2=\bar{f}_*(b)^2=1$, then $\bar{f}_*(a),\bar{f}_*(b)$
must be of the form $ab\cdots ba$ or $ba\cdots ab$, and
$\bar{f}_*(ab)=(ab)^k$ or $(ba)^k=(ab)^{-k}$. So
$\bar{f}_*(\bar{p}_*(\pi_1(S^1))) \subset \bar{p'}_*(\pi_1(S^1))$.

For any $\alpha \in \pi_1(M)$, let $\beta=f_*(p_*(\alpha))$. Since
$j_2(\beta)=\bar{f}_*(\bar{p}_*(\tilde{j}_1(\alpha))) \in
\bar{p'}_*(\pi_1(S^1))$, and  there is $\gamma \in \pi_1(M')$ such
that $\bar{p'}_*(\tilde{j}_2(\gamma))=j_2(\beta)$, so
$$j_2(p'_*(\gamma)\cdot\beta^{-1})=\bar{p'}_*(\tilde{j}_2(\gamma))\cdot j_2(\beta^{-1})=j_2(\beta)\cdot j_2(\beta^{-1})=1.$$

Since $(p'|)_*$ is an isomorphism, there is $\delta \in
\pi_1(\tilde{T'})$ such that
$i_2((p'|)_*(\delta))=p'_*(\gamma)\cdot\beta^{-1}$. We have
$$p'_*(\tilde{i_2}(\delta^{-1})\cdot\gamma)=i_2((p'|)_*(\delta^{-1}))
\cdot p'_*(\gamma)=(p'_*(\gamma)\cdot\beta^{-1})^{-1}\cdot
p'_*(\gamma)=\beta.$$

So $f_*(p_*(\pi_1(M))) \subset p'_*(\pi_1(M'))$, thus $\tilde{f}$
exists.
\end{proof}

\section{The degrees of self maps of torus bundles}

We are going to prove Theorem \ref{main-1} (ref. Proposition
\ref{coord-1}). There are two cases to consider:
\\
\textbf{Case 1:} $\phi$ is conjugated to
$\pm\left(\begin{array}{cc}1 & 0
\\n & 1
\\\end{array}\right)$. Now $M_\phi$ is a seifert manifold whose Euler number of seifert fibering
$e(M_\phi)$ is equal to $n$.

\textbf{(1.I)} \ If $n=0$, $M_\phi$ is $T^3$ or
$S^1\widetilde{\times}S^1\widetilde{\times}S^1$. Here $\phi=$
$\pm\left(\begin{array}{cc}1 & 0
\\0 & 1
\\\end{array}\right)$, any $2 \times 2$ integer matrix $A$
commutes with $\phi$, so $M_\phi$ admits self maps of any degrees.

\textbf{(1.II)} If $n \neq 0$, for a none zero degree map $f: M_\phi
\rightarrow M_\phi$, by \cite[Corollary 0.4]{Wa}, $f$ is homotopic
to a covering map $g: M_\phi \rightarrow M_\phi$. We can choose a
suitable seifert fibering of $M_\phi$ such that $g$ is a fiber
preserving map. Denote the orbifold of $M_\phi$ by $O(M_\phi)$. By
\cite[Lemma 3.5]{Sc}, we have:
\begin{equation}\label{eq1}
\left\{ \begin{array}{l}
         e(M_\phi)=e(M_\phi)\cdot \frac{l}{m}, \\
         deg(g)=l\cdot m,
         \end{array} \right.
\end{equation}
where $l$ is the covering degree of $O(M_\phi)\rightarrow O(M_\phi)$
and $m$ is the fiber degree.

Since $e(M_\phi)\neq 0$, from equation (\ref{eq1}) we get $l=m$.
Thus $deg(f)=deg(g)$ is a square number. Conversely, given a square
number $l^2$, it is easy to construct  a covering map $f: M_\phi
\rightarrow M_\phi$ of degree $l^2$.
\\
\textbf{Case 2:} $\phi=\left(\begin{array}{cc}a & b
\\c & d\\\end{array}\right)$ is not conjugated to
$\pm\left(\begin{array}{cc}1 & 0 \\n & 1
\\\end{array}\right)$.

\begin{thm}\label{lm4} Suppose $\phi$ is not conjugated to
$\pm\left(\begin{array}{cc}1 & 0 \\n & 1
\\\end{array}\right)$
$M_\phi$ admits a self map of degree $l\neq 0$ if and only if there
exist a $2\times 2$ nondegenerate integer matrix $A$ and a positive
integer $k$ such that $l=k \cdot \epsilon \cdot det(A)$ and
$A\cdot\phi_*=(\phi^{\epsilon})_*^k\cdot A$ where $\epsilon=\pm 1$.
\end{thm}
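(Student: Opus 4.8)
The plan is to prove both implications by homotoping a nonzero degree self-map into a \emph{fiber-preserving} map and reading off the triple $(A,k,\epsilon)$ from its fiber and base behaviour. First I would record the structural input: since $\phi$ is not conjugate to $\pm\left(\begin{array}{cc}1 & 0 \\ n & 1\end{array}\right)$, Proposition \ref{coord-1} shows $M_\phi$ carries either $E^3$ geometry with $\phi$ of finite order $3,4,6$, or Sol geometry. In either case Lemma \ref{lm3} applies, so every essential closed surface in $M_\phi$ is isotopic to a union of torus fibers; this is the fact that drives the whole argument.

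For the ``only if'' direction, suppose $f:M_\phi\to M_\phi$ has degree $l\neq 0$. By \cite{Wa} (Corollary 0.4) $f$ is homotopic to a finite covering $g$. Since the torus fiber $T$ is incompressible and $g$ is a covering, $g^{-1}(T)$ is an incompressible closed surface, hence by Lemma \ref{lm3} it is isotopic to a union of torus fibers; after this isotopy $g^{-1}(T)$ is a collection of fibers lying over $k$ points of the base circle $S^1$. Using the product structure of $M_\phi$ between consecutive preimage fibers, I would then homotope $g$ so that it becomes fiber-preserving: it covers a map $\bar g:S^1\to S^1$ of degree $\epsilon k$ (with $k>0$ the number of preimage fibers and $\epsilon=\pm1$ its orientation) and restricts on each fiber to a map inducing a fixed integer matrix $A$ on $\pi_1(T)=\mathbb{Z}^2$, which is nondegenerate because the fiber degree is nonzero. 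The commutation relation then comes from the monodromy: passing to the universal cover of the base, a lift of $g$ has the form $(x,s)\mapsto(Ax,\epsilon k\,s)$ on $T\times\mathbb{R}$, and compatibility with the deck transformation $(x,s)\mapsto(\phi(x),s+1)$ of domain and target forces $A\phi_*=\phi_*^{\epsilon k}A$, i.e. $A\phi_*=(\phi^\epsilon)_*^{k}A$. The degree is the product of the fiber degree $\det A$ and the base degree $\epsilon k$, giving $l=\deg(g)=k\,\epsilon\,\det(A)$, as required.

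For the ``if'' direction I would run the construction in reverse. Given a nondegenerate integer matrix $A$, a positive integer $k$ and $\epsilon=\pm1$ with $A\phi_*=\phi_*^{\epsilon k}A$, define $\hat g:T\times\mathbb{R}\to T\times\mathbb{R}$ by $\hat g(x,s)=(Ax,\epsilon k\,s)$, where $Ax$ denotes the (well-defined, degree $\det A$) self-map of $T=\mathbb{R}^2/\mathbb{Z}^2$ induced by $A$. The identity $A\phi_*=\phi_*^{\epsilon k}A$ is precisely the statement that $\hat g$ intertwines the generating deck transformations of domain and target, so $\hat g$ descends to a fiber-preserving self-map $g:M_\phi\to M_\phi$ of degree $\det(A)\cdot\epsilon k=l$, whence $l\in D(M_\phi)$.

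The main obstacle is the topological step in the forward direction: after arranging $g^{-1}(T)$ to be a union of fibers via Lemma \ref{lm3}, one must genuinely homotope $g$ to a fiber-preserving map and verify both that the induced fiber matrix $A$ is well defined and that the number of preimage fibers is exactly the base degree $k$. The delicate bookkeeping lies in tracking the two independent orientations — of the base circle (giving $\epsilon$) and of the fiber (giving the sign of $\det A$) — so as to land the exponent $\epsilon k$ and the product sign in $l=k\,\epsilon\,\det(A)$ correctly. Once the fiber-preserving model is secured, the commutation relation and the degree formula follow immediately from the monodromy compatibility computed on $T\times\mathbb{R}$.
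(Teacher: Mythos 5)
Your proposal is correct and is essentially the paper's own argument: both reduce, via Lemma \ref{lm3} (after a homotopy making $g^{-1}(T)$ incompressible), to the situation where $g^{-1}(T)$ is a union of $k$ torus fibers, extract the relation $A\cdot\phi_*=(\phi^{\epsilon})_*^k\cdot A$ from monodromy compatibility, obtain $l=k\cdot\epsilon\cdot\det(A)$ as the product of base degree and fiber degree, and prove the converse with exactly your linear model, which is the descent of $(x,s)\mapsto(Ax,\epsilon k s)$. The only differences are presentational: the paper gets incompressibility of $g^{-1}(T)$ from Hempel's Lemma 6.5 (\cite{He}) rather than from Wang's covering theorem (\cite{Wa}), and it organizes the monodromy computation by cutting $M_\phi$ into pieces $V_i\cong T\times I$ and composing the gluing maps $\psi_i$ around the base circle, rather than by deck-transformation equivariance on the infinite cyclic cover $T\times\mathbb{R}$.
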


\begin{proof}
For a torus fiber $T \in M_\phi$, $T$ is incompressible. Suppose $f:
M_\phi \rightarrow M_\phi$ is a self-map of degree $l\neq 0$. By
\cite[Lemma 6.5]{He}, $f$ is homotopic to $g: M_\phi \rightarrow
M_\phi$ such that $g^{-1}(T)$ is an incompressible surface of
$M_\phi$. Thus by Lemma \ref{lm3}, $g^{-1}(T)$ is isotopic to a
union of torus fibers.

Suppose $M_\phi\setminus g^{-1}(T)$ has k components $V_1,...,V_k$.
Each $V_i$ is a $T\times I$. Denote two torus boundary components of
$V_i$ by $T_i^+$ and $T_i^-$,  and the homeomorphism gluing $T_i^-$
to $T_{i+1}^+$ by $\psi_{i}$ see Figure 8. Then
$M_{\psi_k\circ...\circ\psi_1}=M_\phi$. By choosing suitable
coordinate on the torus fiber, we have
$\psi_k\circ...\circ\psi_0=\phi^\epsilon, \, \epsilon=\pm 1$
according to Theorem \ref{thm21}. Below we assume
$\psi_k\circ...\circ\psi_0=\phi$ (replace $\phi$ by $\phi^{-1}$ if
needed). Let $\widetilde{g}: M_\phi\setminus g^{-1}(T)\rightarrow
M_\phi\setminus T$ be the map induced by $g$. We have the following
commuted diagram:
\begin{equation}\label{dia1}
 \xymatrix{
  M_\phi\setminus g^{-1}(T) \ar[d]_{\bigcup\psi_i} \ar[r]^{\widetilde{g}}
                & M_\phi\setminus T \ar[d]^{\phi^\epsilon}  \\
  M_\phi  \ar[r]_{g}
                & M_\phi.             }
\end{equation}

Denote the restriction of $\widetilde{g}$ to $V_i$ by $g_i$. From
the commuted diagram in Figure 8, we have:
\begin{equation}\label{eq2}
g_{i+1}|_{T_{i+1}^+}\circ\psi_i=\phi^{\epsilon}\circ g_i|_{T_i^-},
\end{equation}
where $\epsilon=\pm 1$, $i=1,\cdots,k$ and if $i=k$ then $i+1$ is 1.
\begin{center}
\psfrag{g}[]{$g_i$} \psfrag{h}[]{$g_{i+1}$} \psfrag{i}[]{$\psi_i$}
\psfrag{j}[]{$\phi^{\epsilon}$} \psfrag{s}[]{$T_i^-$}
\psfrag{t}[]{$T_{i+1}^+$} \psfrag{u}[]{$V_i$}
\psfrag{v}[]{$V_{i+1}$}
\includegraphics[height=3in]{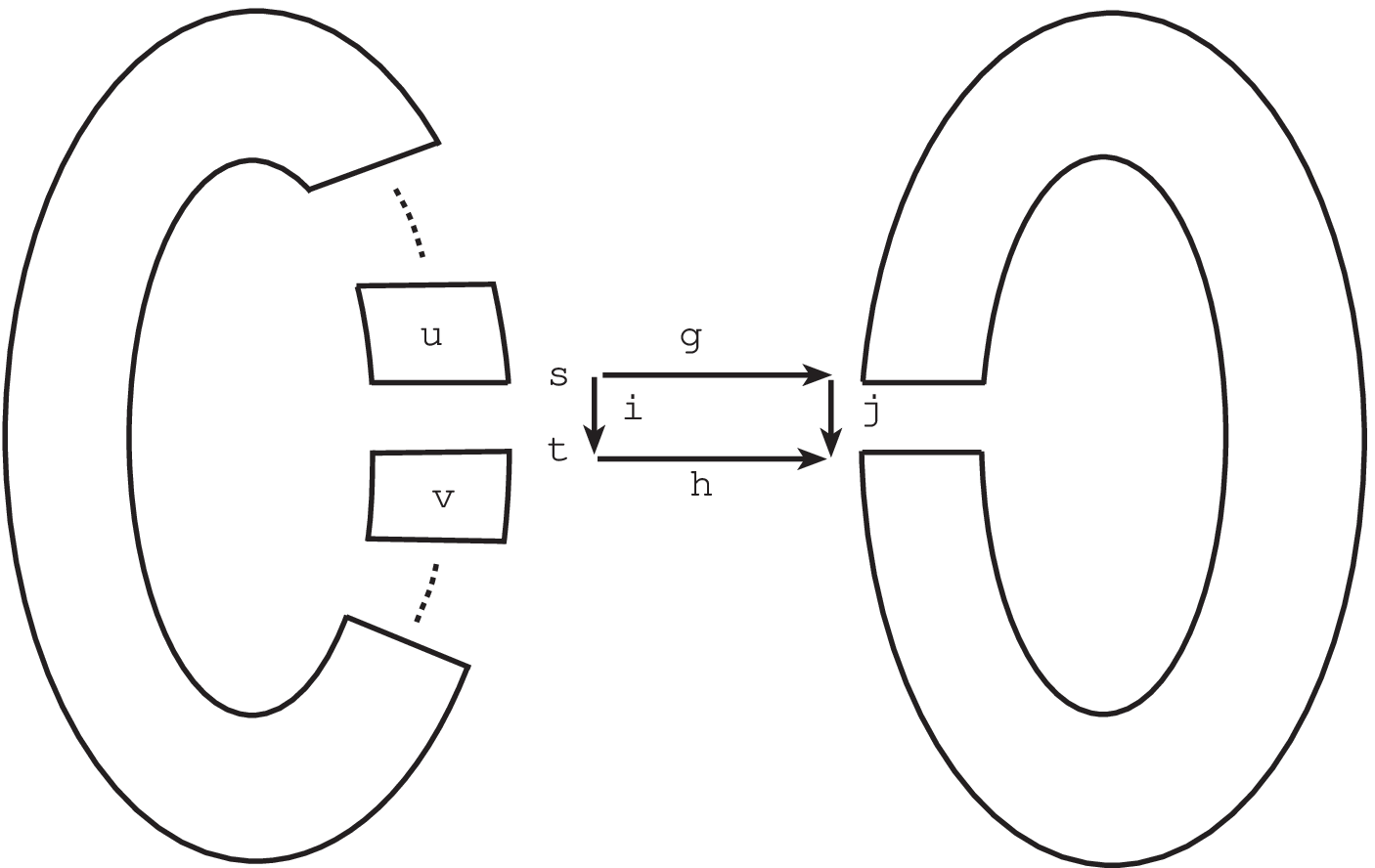}\\
\vskip 0.5 truecm \centerline{Figure 8: Non-zero degree self-map of
$M_\phi$}
\end{center}

Since $T_i^-$ is parallel to $T_i^+$, we can identify $\pi_1(T_i^-)$
with $\pi_1(T_i^+)$. Thus $(g_i|_{T_i^-})_*=(g_i|_{T_i^+})_*$ and
$(\psi_k)_*\cdots(\psi_1)_*=\phi_*$ on fundamental group. The
identity (\ref{eq2}) deduces that:
\begin{eqnarray*}
(g_1|_{T_1^+})_*\cdot\phi_*&=&(g_1|_{T_1^+})_*\cdot(\psi_k)_*\cdots(\psi_1)_*
\\&=&(g_{k+1}|_{T_{k+1}^+})_*\cdot(\psi_k)_*\cdots(\psi_1)_*
\\&=&\phi^{\epsilon}_*\cdot(g_k|_{T_k^-})_*\cdot(\psi_{k-1})_*\cdots(\psi_1)_*
\\&=&\phi^{\epsilon}_*\cdot(g_k|_{T_k^+})_*\cdot(\psi_{k-1})_*\cdots(\psi_1)_*
\\&=&\cdots
\\&=&(\phi^{\epsilon})_*^k\cdot(g_1|_{T_1^+})_*.
\end{eqnarray*}

Set $A=(g_1|_{T_1^+})_*$ and get:
\begin{equation}\label{eq3}
A\cdot\phi_*=(\phi^{\epsilon})_*^k\cdot A.
\end{equation}

Clearly $|deg(g)|=k| det(A)|$. The sign of $deg(g)$ is decided by
$\epsilon$ and the sign of $det(A)$. Thus
$l=deg(f)=deg(g)=k\cdot\epsilon\cdot det(A)$.

 Conversely, we set $\psi_1=\cdots=\psi_{k-1}=id,\ \psi_k=\phi$ and construct the map $\widetilde{g}: M_\phi\setminus g^{-1}(T)\rightarrow M_\phi\setminus T$
 such that $\widetilde{g}|_{V_i}=(\phi^{\epsilon\cdot (i-1)}\circ A)\times id: T\times I\rightarrow T\times I$
 for $i=1,\cdots,k$. This construction fits the commuted diagram (\ref{dia1}). Thus we get the quotient $g:M_\phi\rightarrow M_\phi$
  whose degree is equal to $k \cdot \epsilon \cdot det(A)$.
 \end{proof}

Suppose $A=\left(\begin{array}{cc}p & q
\\r & s\\\end{array}\right)$ where $p,q,r,s \in \mathbb{Z}$.
We use equation (\ref{eq3}) to solve $p,q,r,s$ and then can
determine $l$ by Theorem \ref{lm4}.

\textbf{(2.I)} If $\phi$ is Anosov which means the absolute value of
one eigenvalue of $\phi$ is larger than 1 while the other is less
than 1. In this case, the $k$ in the equation (\ref{eq3}) must be
equal to 1. We have:
$$
\left(\begin{array}{cc}p & q \\r & s\\\end{array}\right) \cdot
\left(\begin{array}{cc}a & b \\c & d\\\end{array}\right)=
\left(\begin{array}{cc}a & b \\c & d\\\end{array}\right)^{\epsilon}
\cdot \left(\begin{array}{cc}p & q \\r & s\\\end{array}\right).
$$

Solve this matric equation and get:

$$
A=\left\{ \begin{array}{l}
        \left(\begin{array}{cc}p & \frac{br}{c}
\\r & \frac{cp+(d-a)r}{c}\\\end{array}\right)\ (\epsilon=1) \\
        \left(\begin{array}{cc}p & \frac{p(d-a)-br}{c}
\\r & -p\\\end{array}\right)\ (\epsilon=-1)
\end{array} \right.
$$
where $\frac{br}{c}, \frac{(d-a)r}{c}, \frac{p(d-a)-br}{c}\in
\mathbb{Z}$.

By Theorem \ref{lm4}, we have:
$$
l=p^2+\frac{(d-a)}{c}\cdot pr-\frac{b}{c}\cdot r^2.
$$

\textbf{(2.II)} If $\phi$ is periodic, may assume  $\phi$ is either
$\left(\begin{array}{cc}-1 & -1
\\1 & 0\\\end{array}\right)$, or $\left(\begin{array}{cc}0 & -1
\\1 & 0\\\end{array}\right)$, or $\left(\begin{array}{cc}0 & -1
\\1 & 1\\\end{array}\right)$.

\textbf{(A)} If $\phi=$$\left(\begin{array}{cc}-1 & -1
\\1 & 0\\\end{array}\right)$ ($\phi$ has order 3), the equation (\ref{eq3}) means:
$$
A\cdot \phi_*=\left\{ \begin{array}{r}
        A\ (k\equiv0 \mod\ 3), \\
        \phi^{\epsilon}_* \cdot A\ (k\equiv1 \mod\ 3), \\
        \phi^{2\epsilon}_* \cdot A\ (k\equiv2 \mod\ 3).
        \end{array} \right.
$$

After solving all the above possible cases, we get:

$$
A= \left\{ \begin{array}{l}
        \left(\begin{array}{cc}p &
q\\-q & p-q\\\end{array}\right)\ (k\equiv1\mod3,\ \epsilon=1) \\
        \left(\begin{array}{cc}p &
q\\q-p & -p\\\end{array}\right)\ (k\equiv1\mod3,\ \epsilon=-1)\\
        \left(\begin{array}{cc}p &
q\\q-p & -p\\\end{array}\right)\ (k\equiv2\mod3,\ \epsilon=1) \\
        \left(\begin{array}{cc}p &
q\\-q & p-q\\\end{array}\right)\ (k\equiv2\mod3,\ \epsilon=-1)
\end{array} \right.
$$
If $k\equiv 0 \mod3$, we have $A=$$\left(\begin{array}{cc}0 & 0
\\0 & 0\\\end{array}\right)$, which induces degree 0 map.

By Theorem \ref{lm4}:

$$
l= \left\{ \begin{array}{r}
        k\cdot(p^2-pq+q^2)\ (k\equiv1\mod3), \\
        k\cdot(-p^2+pq-q^2)\ (k\equiv2\mod3).
\end{array} \right.
$$

It's easy to deduce that:
$$
l=(3t+1)(p^2-pq+q^2),\ t,p,q\in\mathbb{Z}.
$$

The same method is applied to the other two cases and we get:

\textbf{(B)} If $\phi=$$\left(\begin{array}{cc}0 & -1
\\1 & 0\\\end{array}\right)$, then:
$$
l=(4t+1)(p^2+q^2),\ t,p,q\in\mathbb{Z}.
$$

\textbf{(C)} If $\phi=$$\left(\begin{array}{cc}0 & -1
\\1 & 1\\\end{array}\right)$, then:
$$
l=(6t+1)(p^2-pq+q^2),\ t,p,q\in\mathbb{Z}.
$$

\section{The degrees of self maps of torus semi-bundles}

We are going to prove Theorem \ref{main-2} (ref. Proposition
\ref{coord-2}). We will assume that torus semi-bundle $N_\phi$
considered in this section has no torus bundle structure, otherwise
$D(N_\phi)$ is determined in Section 3.

 Suppose the degree of $f:N_\phi\rightarrow N_\phi$
is $l\neq 0$ and $T$ is a torus fiber of $N_\phi$. By \cite[Lemma
6.5]{He}, $f$ is homotopic to $g:N_\phi\rightarrow N_\phi$ such that
$g^{-1}(T)$ is incompressible in $N_\phi$. Thus by Lemma \ref{lm5}
and its proof (also ref. the proof of Theorem \ref{thm23}), we have
 $g^{-1}(T)$ is isotopic to either  a union of torus fibers,
 or a union of torus fibers of another semi-bundle structure which is isomorphic to the original one.
 Also the later case happen only if $N_\psi$ is a Nil manifold. Note by Theorem \ref{thm25} and the proof in Section 3 (1.II),
 Nil 3-manifolds admits no orientation reversing homeomorphism.

 Suppose now $g^{-1}(T)$ has $k$ connected
components, then $N_\phi\setminus g^{-1}(T)$ has two copies of $N$,
denoted by $V_0$ and $V_k$,  and $k-1$ copies of $T\times I$,
denoted by $V_i$, $i=1,\cdots,k-1$. Denote the boundaries of $V_0$
and $V_k$ by $T_0^-$ and $T_k^+$, the boundaries of  $V_i$ by
$T_i^+$ and $T_i^-$, $i=1,\cdots,k-1$,  and the gluing map from
$T_i^-$ to $T_{i+1}^+$ by $\psi_i$ ($i=0,\cdots,k-1$) see Figure 9.

\begin{center}
\psfrag{a}[]{$V_0$} \psfrag{b}[]{$V_1$} \psfrag{c}[]{$V_{k-1}$}
\psfrag{d}[]{$V_k$} \psfrag{e}[]{$N_1$} \psfrag{f}[]{$N_2$}
\psfrag{g}[]{$g_0$} \psfrag{h}[]{$g_1$} \psfrag{i}[]{$\psi_0$}
\psfrag{j}[]{$\phi^{\epsilon}$} \psfrag{k}[]{$T_0^-$}
\psfrag{l}[]{$T_1^+$} \psfrag{m}[]{$T_{k-1}^-$}
\psfrag{n}[]{$T_k^+$} \psfrag{o}[]{$\psi_{k-1}$}
\includegraphics[height=2.5in]{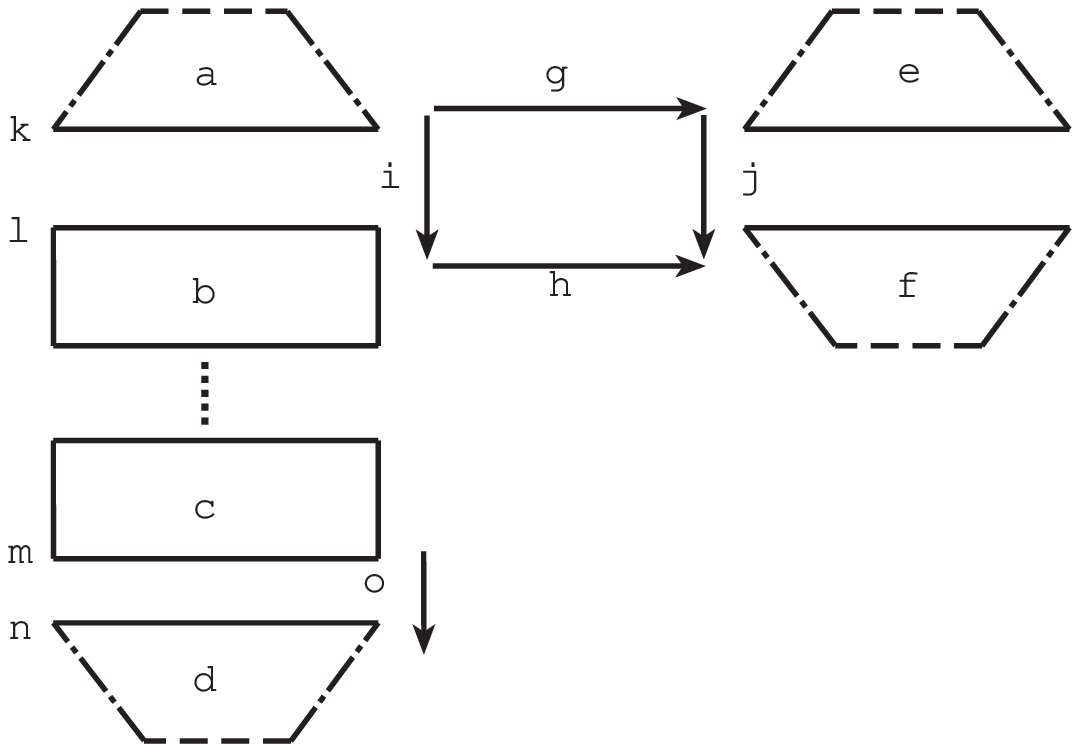}\\

\vskip 0.5 truecm \centerline{Figure 9: Non-zero degree self-map of
$N_\phi$}
\end{center}

Then $N_{\psi_{k-1}\circ...\circ\psi_0}=N_\phi$, and
$\psi_{k-1}\circ...\circ\psi_0=\phi^\epsilon, \, \epsilon=\pm 1$ by
Theorem \ref{thm23} (with a suitable orientation of the canonical
coordinate).  Below we assume $\psi_{k-1}\circ...\circ\psi_0=\phi$
(replace $\phi$ by $\phi^{-1}$ if needed). Let
$\widetilde{g}:N_\phi\setminus g^{-1}(T)\rightarrow N_\phi\setminus
T$ be the map induced by $g$, and we have commuted diagram:
\begin{equation}\label{dia2}
\xymatrix{ N_\phi\setminus g^{-1}(T) \ar[d]_{\bigcup\psi_i}
\ar[r]^{\widetilde{g}}
                & N_\phi\setminus T \ar[d]^{\phi^\epsilon}  \\
N_\phi  \ar[r]_{g}
                & N_\phi.             }
\end{equation}

Since $T_i^+$ is parallel to $T_i^-$, we can identity $\pi_1(T_i^+)$
with $\pi_1(T_i^-)$ ($i=0,\cdots,k-1$). Thus
$(\psi_{k-1})_*\cdots(\psi_0)_*=\phi_*$ on fundamental group. Denote
the restriction of $\widetilde{g}$ on $V_i$ by $g_i$. Then $g:
V_i\to N_1$ if $i$ even, and  $g: V_i\to N_2$ if $i$ odd.

\begin{lem}
Under the canonical basis $(l_0, l_\infty)$, $(g_0|_{T_0^-})_*$ is
of the form $\left(\begin{array}{cc}2m+1 & 0
\\0 & n \\ \end{array}\right)$ where $n\neq 0,m,n\in\mathbb{Z}$, and
so is $(g_k|_{T_k^+})_*$.
\end{lem}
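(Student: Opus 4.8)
The plan is to pass to fundamental groups and exploit that $\pi_1(N)$ is the Klein bottle group. Using the double cover $p\colon T^2\times I\to N$ with deck involution $\tau(x,y)=(x+\pi,-y)$, one identifies $\pi_1(N)=\Gamma=\langle t,s\mid tst^{-1}=s^{-1}\rangle$, where $s$ is the class of $C_y$ and $t$ is a lift of $\tau$, so that $t^2$ is the class of $C_x$. Under $p$ the boundary curves become $l_\infty=s$ and $l_0=t^2$, and since $\partial N$ is incompressible the inclusion identifies $\pi_1(\partial N)$ with the subgroup $\langle t^2,s\rangle\le\Gamma$, on which $(l_0,l_\infty)=(t^2,s)$ is the chosen basis. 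The map $g_0\colon(V_0,T_0^-)\to(N_1,\partial N_1)$ is a proper map between two copies of $N$; because $g$ is homotopic to a covering (\cite{Wa}) and the pieces are incompressible, $\Psi:=(g_0)_*\colon\Gamma\to\Gamma$ is an injective homomorphism, and $(g_0|_{T_0^-})_*$ is just the restriction of $\Psi$ to $\langle t^2,s\rangle$. Thus the lemma reduces to computing $\Psi(s)$ and $\Psi(t^2)$.

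For the $l_\infty$-column, the key observation is that $\langle s\rangle$ is characteristic: an element of $\Gamma$ is conjugate to its own inverse if and only if it lies in $\langle s\rangle$ (this follows by reading off the image in the abelianization $\mathbb{Z}\oplus\mathbb{Z}/2$, which forces the $t$-exponent to vanish, while every power of $s$ is conjugated to its inverse by $t$). Since $s$ is conjugate to $s^{-1}$, so is $\Psi(s)$, whence $\Psi(s)=s^n$ for some $n$; injectivity of $\Psi$ gives $n\neq 0$. In the basis $(l_0,l_\infty)$ this says $l_\infty\mapsto n\,l_\infty$ with no $l_0$-component, i.e.\ the second column is $(0,n)^{\mathsf T}$ with $n\neq0$.

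For the $l_0$-column I would write $\Psi(t)=s^a t^b$ in normal form and feed it into the relation $\Psi(t)\Psi(s)\Psi(t)^{-1}=\Psi(s)^{-1}$. Using $t^b s\,t^{-b}=s^{(-1)^b}$ this forces $n(-1)^b=-n$, hence $(-1)^b=-1$ and $b$ is odd. Then a direct expansion gives $\Psi(t^2)=\Psi(t)^2=s^{a(1+(-1)^b)}t^{2b}=t^{2b}$, the $s$-part cancelling precisely because $b$ is odd; equivalently $\Psi(l_0)=(2m+1)\,l_0$ with $b=2m+1$. This yields the first column $(2m+1,0)^{\mathsf T}$, so in the basis $(l_0,l_\infty)$ the matrix is $\left(\begin{smallmatrix}2m+1&0\\0&n\end{smallmatrix}\right)$ with $n\neq0$; the same argument applied to the other $N$-piece $V_k$ gives the statement for $(g_k|_{T_k^+})_*$. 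The main obstacle is not the computation but the two structural inputs — identifying $\langle s\rangle$ and the center $\langle t^2\rangle$ as the characteristic subgroups carrying $l_\infty$ and $l_0$, and using $\pi_1$-injectivity of $g$ to guarantee $n\neq0$; once these are in place the relation computation is routine.
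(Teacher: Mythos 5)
Your proof is correct and takes essentially the same approach as the paper: the paper likewise passes to the Klein bottle group presentation $\pi_1(N)=\langle a,b \mid a=bab\rangle$ with $l_0=a^2$, $l_\infty=b$ (your $t,s$), writes $g_*(a)=a^{m'}b^q$, $g_*(b)=a^pb^n$ in normal form, and solves the defining relation to force $p=0$, $m'$ odd, and $g_*(a^2)=a^{4m+2}$. The only cosmetic differences are that you pin down $\Psi(s)\in\langle s\rangle$ first via the conjugate-to-inverse/abelianization observation and get $n\neq 0$ from $\pi_1$-injectivity through \cite{Wa}, whereas the paper solves the relation for both generators at once and discards the $n=0$ solution because $g_0$ has non-zero degree.
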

\begin{proof} Let $g: N\to N$ be a proper map, we argue that under the basis $(l_0, l_\infty)$,
$(g|_{\partial N})_*$ is of the form $\left(\begin{array}{cc}2m+1 &
0
\\0 & n \\ \end{array}\right)$ where $n\neq 0,m,n\in\mathbb{Z}$.

Choose a presentation $\pi_1(N)=<a,b\ |\ a=bab>$ with $l_0=a^2$ and
$l_\infty=b$. Suppose  $g_{*}(a)=a^{m'} b^q,\ g_{*}(b)=a^p b^n$.
Since $g_{*}(a)=g_{*}(b)g_{*}(a)g_{*}(b)$, we get:
$$
a^{m'} b^q=a^p b^n a^{m'} b^q a^p b^n=a^{m'+2p}b^{(-1)^{m'+p}\cdot
n+(-1)^p\cdot q+n}.
$$

Thus:
$$
\left\{ \begin{array}{l}
         m'=m'+2p\\
         q=(-1)^{m'+p}\cdot n+(-1)^p\cdot q+n
        \end{array} \right.
        \Longrightarrow
\left\{ \begin{array}{l}
         p=0\\
         m'\ odd
\end{array} \right.
\ or \ \left\{ \begin{array}{l}
         p=0\\
         n=0.
\end{array} \right.
$$

Abandon the case that $p=n=0$ for $g_0$ is non-zero degree map and
let $m'=2m+1$, we get: $g_{*}(a)=a^{2m+1} b^q,\ g_{*}(b)=b^n$.

Since $\pi_1(\partial N)=<a^2,b\ |\ [a^2,b]=1>$ and
$g_{*}(a^2)=a^{2m+1} b^q a^{2m+1} b^q=a^{4m+2}$, we have
$$
(g|_{\partial N})_*=\left(
                   \begin{array}{cc}
                     2m+1 & 0 \\
                     0 & n \\
                   \end{array}
                 \right).
$$
\end{proof}

\begin{thm}
If $N_\phi$ has no torus bundle structure, then $N_\phi$ admits a
self map of degree $l\neq 0$ if and only if there exist a positive
integer $k$ and two integer matrices $A_1$, $A_2$ of form $\left(
                                                            \begin{array}{cc}
                                                              2m+1 & 0 \\
                                                              0 & n \\
                                                            \end{array}
                                                          \right),(m,n\in\mathbb{Z},n\neq 0)$
satisfying the following equation:
$$
A_2 \cdot \phi_*=\left\{
\begin{array}{r}
         (\phi^{-\epsilon}_* \cdot \tau_* \cdot \phi^{\epsilon}_*
\cdot \tau_*) ^{s-1} \cdot \phi^{-\epsilon}_*
\cdot \tau_* \cdot \phi^{\epsilon}_* \cdot A_1\ (k=2s),\\
(\phi^{\epsilon}_* \cdot \tau_* \cdot \phi^{-\epsilon}_* \cdot
\tau_*) ^{s} \cdot \phi^{\epsilon}_* \cdot A_1\ \ \ \ \ \ \ \ \ \
(k=2s+1),
                          \end{array} \right.
$$
such that $l=k\cdot \epsilon \cdot det(A_1)$ where $\epsilon=\pm 1$.
\end{thm}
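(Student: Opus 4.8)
The plan is to mimic the proof of Theorem~\ref{lm4}, reading off the boundary data of $g$ piece by piece along $N_\phi\setminus g^{-1}(T)=V_0\cup V_1\cup\cdots\cup V_k$ and then telescoping. First I would put $A_1=(g_0|_{T_0^-})_*$ and $A_2=(g_k|_{T_k^+})_*$. By the preceding Lemma both are of the diagonal form $\left(\begin{array}{cc}2m+1 & 0\\ 0 & n\end{array}\right)$ with $n\neq 0$, so they automatically lie in the allowed set. The two caps $V_0,V_k$ are copies of $N$ carrying these maps, whereas each middle piece $V_i$ ($1\le i\le k-1$) is a copy of $T\times I$ mapping properly into one of the two copies $N_1,N_2$ of $N$; for such a piece the Lemma asserting $(g|_{T^+})_*=\tau_*\,(g|_{T^-})_*$ gives $(g_i|_{T_i^+})_*=\tau_*\,(g_i|_{T_i^-})_*$, so traversing one middle piece contributes exactly one factor $\tau_*$.

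Next I would extract the gluing relations from diagram~(\ref{dia2}). Because the parities of consecutive pieces alternate, each gluing $\psi_i$ corresponds on the target side to crossing the single fibre $T$, and hence to the factor $\phi^{\epsilon}_*$ when it passes from $N_1$ to $N_2$ (that is, $i$ even) and to $\phi^{-\epsilon}_*$ when it passes from $N_2$ to $N_1$ ($i$ odd); explicitly $(g_{i+1}|_{T_{i+1}^+})_*\,(\psi_i)_*=\phi^{\pm\epsilon}_*\,(g_i|_{T_i^-})_*$. Unrolling these identities from $V_0$ to $V_k$, each middle piece inserts a $\tau_*$ and each gluing inserts the alternating $\phi^{\pm\epsilon}_*$, while the factors $(\psi_i)_*$ pile up on the right as $(\psi_0)_*^{-1}\cdots(\psi_{k-1})_*^{-1}=\big((\psi_{k-1})_*\cdots(\psi_0)_*\big)^{-1}=\phi_*^{-1}$. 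Multiplying on the right by $\phi_*$ then reproduces exactly the two displayed identities, according to the parity of $k$. I expect the main obstacle to be precisely this bookkeeping: fixing canonical coordinates on every $\partial V_i$ compatibly so that $(\psi_{k-1})_*\cdots(\psi_0)_*=\phi_*$ holds on the nose, and checking that the alternating powers of $\phi^{\pm\epsilon}_*$ together with the $\tau_*$'s -- which must arise only from the middle pieces and never from the gluings -- assemble into the stated word.

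For the degree I would use the interval fibration $N_\phi\to[-1,1]$ whose generic fibre is $T$ and whose fibres over $\pm1$ are the core Klein bottles. The induced self-map of $[-1,1]$ has $k$ preimages over a regular interior value, since $g^{-1}(T)$ has $k$ components, so it contributes a factor $k$; the fibre degree is constant along the base and equal to $|\det A_1|$, because every boundary transition matrix differs from $A_1$ only by factors $\tau_*$ ($\det=-1$) and $\phi^{\pm\epsilon}_*$ ($\det=1$), all of absolute determinant $1$. Hence $|\deg(g)|=k\,|\det A_1|$, and the sign is determined by $\epsilon$ (orientation on the base) together with $\mathrm{sign}(\det A_1)$ (orientation on the fibre), giving $l=\deg(f)=\deg(g)=k\,\epsilon\,\det(A_1)$.

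Finally, for the converse I would reverse the construction, as in Theorem~\ref{lm4}. Given $k$, $\epsilon$ and diagonal matrices $A_1,A_2$ of the stated form obeying the displayed equation, I realize $A_1,A_2$ by proper self-maps of $N$ on the caps $V_0,V_k$ -- the proof of the preceding Lemma shows that every matrix $\left(\begin{array}{cc}2m+1 & 0\\ 0 & n\end{array}\right)$ with $n\neq 0$ is induced by such a map -- and realize each middle piece by a linear map of $T\times I$ interpolating the prescribed boundary data. Taking $\psi_1=\cdots=\psi_{k-1}=\mathrm{id}$ and absorbing $\phi$ into $\psi_0$, the displayed matrix identity is exactly the compatibility condition guaranteeing that the piece maps agree on the glued tori and descend to a well-defined $g:N_\phi\to N_\phi$ fitting diagram~(\ref{dia2}); its degree equals $k\,\epsilon\,\det(A_1)$ by the computation above.
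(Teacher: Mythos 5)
Your proposal is correct and follows essentially the same route as the paper's proof: identifying $A_1=(g_0|_{T_0^-})_*$ and $A_2=(g_k|_{T_k^+})_*$ via the preceding lemma, inserting one $\tau_*$ per $T\times I$ piece and one alternating $\phi^{\pm\epsilon}_*$ per gluing, telescoping against $(\psi_{k-1})_*\cdots(\psi_0)_*=\phi_*$, and reversing the construction (with all but one $\psi_i$ trivial) for the converse. Your bookkeeping of where the $\tau_*$'s and $\phi^{\pm\epsilon}_*$'s arise, and your justification of $|\deg(g)|=k|\det A_1|$ via the interval fibration, match (and slightly elaborate on) what the paper does.
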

\begin{proof}
From Figure 9, we know that:
\begin{equation}
g_{i+1}|_{T_{i+1}^+}\circ\psi_i=\left\{ \begin{array}{l}
         \ \phi^{\epsilon}\circ g_i|_{T_i^-}\ (i\equiv0\mod 2), \\
         \phi^{-\epsilon}\circ g_i|_{T_i^-}\ (i\equiv1\mod 2),
         \end{array}
\right.
\end{equation}
where $\epsilon=\pm 1$, $i=0,\cdots,k-1$.

Thus if $k=2s$ is even, then:
\begin{eqnarray}\label{eq32}
(g_k|_{T_k^+})_* \cdot \phi_*&=&(g_k|_{T_k^+})_* \cdot (\psi_{k-1})_* \cdots (\psi_0)_*\nonumber\hskip 1truecm \text{by Figure 9}\\
&=&\phi^{-\epsilon}_* \cdot (g_{k-1}|_{T_{k-1}^-})_* \cdot (\psi_{k-2})_* \cdots (\psi_0)_*\nonumber\hskip 1truecm \text{by (4.2)}\\
&=&\phi^{-\epsilon}_* \cdot \tau_* \cdot (g_{k-1}|_{T_{k-1}^+})_* \cdot (\psi_{k-2})_* \cdots (\psi_0)_*\nonumber\hskip 1truecm \text{by Lemma 2.8}\\
&=&\cdots\nonumber\\
&=&(\phi^{-\epsilon}_* \cdot \tau_* \cdot \phi^{\epsilon}_* \cdot
\tau_*) ^{s-1} \cdot \phi^{-\epsilon}_* \cdot \tau_* \cdot
\phi^{\epsilon}_* \cdot (g_0|_{T_0^-})_*.
\end{eqnarray}

If $k=2s+1$ is odd, then:
\begin{eqnarray}\label{eq33}
(g_k|_{T_k^+})_* \cdot \phi_*
&=&(g_k|_{T_k^+})_* \cdot (\psi_{k-1})_* \cdots (\psi_0)_*\nonumber\\
&=&\phi^{\epsilon}_* \cdot (g_{k-1}|_{T_{k-1}^-})_* \cdot (\psi_{k-2})_* \cdots (\psi_0)_*\nonumber\\
&=&\phi^{\epsilon}_* \cdot \tau_* \cdot (g_{k-1}|_{T_{k-1}^+})_* \cdot (\psi_{k-2})_* \cdots (\psi_0)_*\nonumber\\
&=&\cdots\nonumber\\
&=&(\phi^{\epsilon}_* \cdot \tau_* \cdot \phi^{-\epsilon}_* \cdot
\tau_*) ^{s} \cdot \phi^{\epsilon}_* \cdot (g_0|_{T_0^-})_*.
\end{eqnarray}

It is easy to see that
 $|deg(g)|=k| det(g_0|_{T_0^-})_*|$. The sign
of $deg(g)$ is decided by both $\epsilon$ and the sign of
$det(g_0|_{T_0^-})_*$. Thus $l=deg(f)=deg(g)=k\cdot\epsilon\cdot
det(g_0|_{T_0^-})_*$. Finally by applying Lemma 4.1, we finish the
proof of one direction of Theorem 4.2.

Conversely, if given $k,A_1,A_2$, then we can easily construct the
maps $g_0, g_k: N\rightarrow N$ such that $(g_0|_{T_0^-})_*=A_1,
(g_k|_{T_k^+})_*=A_2$. Set
$\psi_0=\cdot=\psi_{k-2}=id,\psi_{k-1}=\phi$ and $g_i: T\times
I\rightarrow N\ (i=1,\cdots,k-1)$ is a map such that:
 $$
 g_{i}|_{T_i^+}=\left\{
\begin{array}{l}
         \phi^{\epsilon}\circ g_{i-1}|_{T_{i-1}^-}\ \ \ (i\equiv1\mod 2), \\
         \phi^{-\epsilon}\circ g_{i-1}|_{T_{i-1}^-}\ (i\equiv0\mod
         2).
         \end{array}
\right.$$

Then $\widetilde{g}=\bigcup g_i$ fits the commutative diagram
(\ref{dia2}). Thus we get the quotient map $g: N_\phi\rightarrow
N_\phi$ of degree $k\cdot\epsilon\cdot det(A_1)$.
\end{proof}

 Given $\phi_*=\left(
                  \begin{array}{cc}
                    a & b \\
                    c & d \\
                  \end{array}
                \right)
\in GL_2(\mathbb{Z})$ and suppose $(g_0|_{T_0^-})_*=\left(
                  \begin{array}{cc}
                    2m+1 & 0 \\
                    0 & n \\
                  \end{array}
                \right)$,
$(g_k|_{T_k^+})_*=\left(
                  \begin{array}{cc}
                    2m^{'}+1 & 0 \\
                    0 & n^{'} \\
                  \end{array}
                \right)$ where $m,n,m^{'},n^{'}\in\mathbb{Z}$.
\\
\textbf{Case 1:} $abcd\neq 0, ad-bc=1$. (It should be noted that
$(\tau \phi \tau \phi^{-1})_*$ is Anosov.)

Since $g:N_\phi \to N_\phi$ satisfies $g^{-1}(T)$ is copies of torus
fiber, by Theorem \ref{thm25} $g$ can be lift to $g':M_{\tau \phi
\tau  \phi^{-1}} \to M_{\tau  \phi  \tau  \phi^{-1}}$. By the
argument of Anosov monodromy case in Section 3, the degree of $g'$
in the $S^1$ direction is 1. So we have $k=1$.

 By equation (\ref{eq33}), we have:
$$
(g_1|_{T_1^+})_* \cdot \phi_*=\phi^{\epsilon}_* \cdot
(g_0|_{T_0^-})_*.
$$

If $\epsilon=1$, then:

$$\left(
                  \begin{array}{cc}
                    2m'+1 & 0 \\
                    0 & n' \\
                  \end{array}
                \right)
                \cdot
\left(
                  \begin{array}{cc}
                    a & b \\
                    c & d \\
                  \end{array}
                \right)
                =
\left(
                  \begin{array}{cc}
                    a & b \\
                    c & d \\
                  \end{array}
                \right)
                \cdot
\left(
                  \begin{array}{cc}
                    2m+1 & 0 \\
                    0 & n \\
                  \end{array}
                \right).
$$

Solving this matrix equation we have:
$$
\left\{ \begin{array}{l}
         n=2m+1,\\
         m'=m,\\
         n'=2m+1.
         \end{array} \right.
$$

Thus $(g_0|_{T_0^-})_*=\left(
                  \begin{array}{cc}
                    2m+1 & 0 \\
                    0 & 2m+1 \\
                  \end{array}
                \right)$ which means:
$$
deg(g)=k\cdot\epsilon\cdot det((g_0|_{T_0^-})_*)=(2m+1)^2.
$$

If $\epsilon=-1$, then:

$$\left(
                  \begin{array}{cc}
                    2m'+1 & 0 \\
                    0 & n' \\
                  \end{array}
                \right)
                \cdot
\left(
                  \begin{array}{cc}
                    a & b \\
                    c & d \\
                  \end{array}
                \right)
                =
\left(
                  \begin{array}{cc}
                    a & b \\
                    c & d \\
                  \end{array}
                \right)^{-1}
                \cdot
\left(
                  \begin{array}{cc}
                    2m+1 & 0 \\
                    0 & n \\
                  \end{array}
                \right).
$$

Solving this matrix equation we have:
$$
\left\{ \begin{array}{l}
         n=-(2m'+1),\\
         (2m'+1)\cdot a=(2m+1)\cdot d,\\
         n'=-(2m+1).
         \end{array} \right.
$$

Suppose $(2m+1)=u\cdot \frac{a}{gcd(a,d)}$, then both $u$ and
$\frac{a}{gcd(a,d)}$ must be odd. Similarly, since $n=2m'+1=-u\cdot
\frac{d}{gcd(a,d)}$ is odd, then $\frac{d}{gcd(a,d)}$ is odd also.

Thus $(g_0|_{T_0^-})_*=\left(
                  \begin{array}{cc}
                    u\cdot \frac{a}{gcd(a,d)} & 0 \\
                    0 & -u\cdot \frac{d}{gcd(a,d)} \\
                  \end{array}
                \right)$ which means:
$$
deg(g)=k\cdot \epsilon\cdot det((g_0|_{T_0^-})_*)=u^2\cdot
\frac{ad}{gcd(a,d)^2}.
$$
This degree can be realized here if and only if
$\frac{ad}{gcd(a,d)^2}$ is odd.
\\

\textbf{Case 2:} $abcd=0$. Then there are three subcases.

\textbf{(2.I)} $\phi_*=\left(
                  \begin{array}{cc}
                    1 & 0 \\
                    z & 1 \\
                  \end{array}
                \right)
$.

In this case $N_\phi$ is a torus bundle which has been discussed in
section 3.

\textbf{(2.II)} $\phi_*=\left(
                  \begin{array}{cc}
                    0 & 1 \\
                    1 & z \\
                  \end{array}
                \right)
$, or equivalently $\left(
                  \begin{array}{cc}
                    z & 1 \\
                    1 & 0 \\
                  \end{array}
                \right)$.

When $z\neq 0$, we discuss the following four possible cases:

\textbf{(A)} If $\epsilon=1$ and $k=2s$ is even, then by equation
(4.3), we have the following equation:
$$\left(
                  \begin{array}{cc}
                    2m'+1 & 0 \\
                    0 & n' \\
                  \end{array}
                \right)
                \cdot
\left(
                  \begin{array}{cc}
                    0 & 1 \\
                    1 & z \\
                  \end{array}
                \right)
                =(-1)^s
\left(
                  \begin{array}{cc}
                    1 & zk \\
                    0 & -1 \\
                  \end{array}
                \right)
                \cdot
\left(
                  \begin{array}{cc}
                    2m+1 & 0 \\
                    0 & n \\
                  \end{array}
                \right).
$$
This equation has no solution.

\textbf{(B)} If $\epsilon=-1$ and $k=2s$ is even, then by equation
(4.3):
$$\left(
                  \begin{array}{cc}
                    2m'+1 & 0 \\
                    0 & n' \\
                  \end{array}
                \right)
                \cdot
\left(
                  \begin{array}{cc}
                    0 & 1 \\
                    1 & z \\
                  \end{array}
                \right)
                =(-1)^s
\left(
                  \begin{array}{cc}
                    1 & 0 \\
                    zk & -1 \\
                  \end{array}
                \right)
                \cdot
\left(
                  \begin{array}{cc}
                    2m+1 & 0 \\
                    0 & n \\
                  \end{array}
                \right).
$$
This equation has no solution either.

\textbf{(C)} If $\epsilon=1$ and $k=2s+1$ is odd, then by equation
(4.4):
$$\left(
                  \begin{array}{cc}
                    2m'+1 & 0 \\
                    0 & n' \\
                  \end{array}
                \right)
                \cdot
\left(
                  \begin{array}{cc}
                    0 & 1 \\
                    1 & z \\
                  \end{array}
                \right)
                =(-1)^s
\left(
                  \begin{array}{cc}
                    0 & 1 \\
                    1 & kz \\
                  \end{array}
                \right)
                \cdot
\left(
                  \begin{array}{cc}
                    2m+1 & 0 \\
                    0 & n \\
                  \end{array}
                \right).
$$

Solving this matrix equation:
$$
\left\{ \begin{array}{l}
         n=(-1)^s(2m'+1),\\
         n'=(-1)^s (2m+1),\\
         n'=(-1)^s kn.
         \end{array} \right.
$$
So $2m+1=kn$, thus $k$ is odd, if $k$ exists.

Then $(g_k|_{T_k^+})_*=\left(
                  \begin{array}{cc}
                    2m'+1 & 0 \\
                    0 & k(2m'+1) \\
                  \end{array}
                \right)$ which means:
$$
deg(g)=k\cdot \epsilon\cdot det((g_0|_{T_0^-})_*)=k\cdot
\epsilon\cdot det((g_k|_{T_k^+})_*)=k^2\cdot (2m'+1)^2.
$$

This degree is an odd square number. In another hand, when $k=1$,
all odd square number can be realized as a
degree:
$(g_k|_{T_k^+})_*=\left(
                  \begin{array}{cc}
                    2m'+1 & 0 \\
                    0 & 2m'+1 \\
                  \end{array}
                \right).$

\textbf{(D)} If $\epsilon=-1$ and $k=2s+1$ is odd, then by equation
(4.4):
$$\left(
                  \begin{array}{cc}
                    2m'+1 & 0 \\
                    0 & n' \\
                  \end{array}
                \right)
                \cdot
\left(
                  \begin{array}{cc}
                    0 & 1 \\
                    1 & z \\
                  \end{array}
                \right)
                =(-1)^s
\left(
                  \begin{array}{cc}
                    -zk & 1 \\
                    1 & 0 \\
                  \end{array}
                \right)
                \cdot
\left(
                  \begin{array}{cc}
                    2m+1 & 0 \\
                    0 & n \\
                  \end{array}
                \right).
$$
This equation has no solution.

When $z=0$, the same method will show that $deg(g)$ is odd, and all
odd numbers can be realized.

\textbf{(2.III)} $\phi_*=\left(
                  \begin{array}{cc}
                    1 & z \\
                    0 & 1 \\
                  \end{array}
                \right)
$.

In this case, $deg(g)$ can be determined as in case \textbf{(2.II)}.

{\ttfamily{A{\footnotesize CKNOWLEDGEMENTS.}}}  The paper is
enhanced by the referee's comments. The authors are supported by
grant No.10631060 of the National Natural Science Foundation of
China.


\bibliographystyle{amsalpha}

Hongbin Sun

School of Mathematical Sciences, Peking University, Beijing 100871,
China

e-mail: hongbin.sun2331@gmail.com

\vskip 0.5 true cm

Shicheng Wang

School of Mathematical Sciences, Peking University, Beijing 100871,
China

e-mail: wangsc@math.pku.edu.cn

\vskip 0.5 true cm

Jianchun Wu

School of Mathematical Sciences, Peking University, Beijing 100871,
China

e-mail: wujianchun@math.pku.edu.cn
\end{document}